\newtheorem{remark}{Remark}
\newtheorem{definition}[remark]{Definition}
\newtheorem{theorem}[remark]{Theorem}
\newtheorem{corollary}[remark]{Corollary}
\title{Lexicographic metric spaces: basic properties and the
metric dimension}
\author{Juan Alberto Rodr\'{\i}guez-Vel\'{a}zquez 
\\
\\
 {\small Universitat Rovira i Virgili
}\\
{\small Departament d'Enginyeria Inform\`atica i Matem\`atiques}\\
 {\small  Av. Pa\"{\i}sos Catalans 26,
43007 Tarragona, Spain.} \\{\small juanalberto.rodriguez\@@urv.cat}
 }
\begin{document}
\maketitle

\begin{abstract}
 In this article, we introduce the concept of lexicographic metric space and, after  discussing some basic properties of these metric spaces, such as completeness, boundedness, compactness and separability, we obtain a formula for the metric dimension of any lexicographic metric space.
\end{abstract}

{\it Keyword}:
 Lexicographic metric spaces; Gravitational metric spaces; Metric spaces; Lexicographic product; Metric dimension.



{\it AMS Subject Classification numbers}:  	54E35;	54E45;  	54E50


\section{Introduction}
A metric space $M=(X,d)$ is formed by a set $X$ of points and a distance $d$ defined in $X$.
In particular, the vertex set of any connected graph, equipped with the shortest path distance, is a metric space. This suggests approaching problems related to metric parameters in graphs as more general problems in the context of metric spaces. However, in some cases, a problem has been formulated within the theory of metric spaces and has become popular and extensively studied in graph theory. This is the case of the metric dimension;  a theory introduced by Blumenthal \cite{Blumenthal1953} in 1953   in the general context of metric spaces, that became popular more than twenty years later,  after being introduced by Slater \cite{Slater1975,Slater1988} in 1975, and by Harary and Melter
\cite{Harary1976} in 1976, in the particular context of graph theory. In other cases, like the Cartesian product of graphs, and the Cartesian product of metric spaces, the theory has been widely studied in both contexts. In this paper, we introduce the study of lexicographic metric spaces as a natural generalization of the theory of lexicographic product graphs. In particular, we define the concept of lexicographic metric space in such a way that the set of vertices of any lexicographic product graph, equipped with the distance induced by the graph, is a lexicographic metric space. We study  some basic properties of lexicographic metric spaces, such as completeness, boundedness, compactness and separability. Furthermore, we obtain a formula for the metric dimension of any lexicographic metric space.   As a consequence of the study, we derive some results on a kind of metric spaces which are closely related to the lexicographic metric spaces, and that we call gravitational metric spaces.

The remainder of the article is structured as follows. In section 2 we define the concepts of lexicographic metric space and gravitational metric space, and we also justify the terminology used. Section  3 is devoted to the study of basic properties of these metric spaces.
Finally,  the metric dimension is studied in Section 4. We assume that the reader is familiar with the basic concepts and terminology of metric spaces. If this is not the case, we suggest the textbooks \cite{MR0267052,MR2244658}.

\section{Lexicographic metric spaces and gravitational metric spaces}\label{sectPrelim}

There are metric spaces with metrics that may differ from the discrete metric\footnote{The discrete metric on $X$ is given by $ d(x,y) = 0$ if $x = y$ and $d(x, y) = 1$ otherwise.} yet generate the same topology. Such spaces are called \textit{discrete metric spaces} \cite{MR2244658}. In order to avoid misunderstandings regarding this concept, we will emphasize it in the following definition.

\begin{definition}{\rm \cite{MR2244658}}
A metric space $M$ is called a \textit{discrete metric space} if and only if all its subsets are open (and therefore closed) in $M$.
\end{definition}

For instance, the set $\mathbb{N}$ with its usual metric inherited from the one-dimensional Euclidean space $\mathbb{R}$ is a discrete metric space, every finite metric space is a discrete metric space  and, in particular, the vertex set of any graph  equipped with the shortest path distance is a discrete metric space.

Given a metric space $M=(X,d)$, and a point $x\in X$, we define the
\textit{nearness of $x$} to be $$\eta(x)=\inf \{d(x,y):\;   y\in X\setminus\{x\}\}.$$
We define the
\textit{nearness of the metric space} to be $$\eta(M)=\inf \{ \eta(x):\;                                               x\in X \}.$$
If a metric space $M=(X,d)$ is equipped with the discrete metric, then $\eta(x)=1$ for every $x\in X$.
  As an example of a discrete metric space with $\eta(M)= 0$ we take $M=(X,d)$ where $X=\{1/n: \; n\in \mathbb{N}\}$ and $d$ is the usual Euclidean metric inherited from $\mathbb{R}$. Notice that this discrete metric space is not complete.
  \begin{theorem}\label{ConsequenceNearness0}
  Let $M=(X,d)$ be a metric space.  If $\eta(M)> 0$, then $M$ is a complete discrete metric space.
  \end{theorem}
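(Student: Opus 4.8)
The plan is to derive both conclusions from a single observation: when $\eta(M)>0$, distinct points of $X$ are uniformly separated. Writing $\varepsilon=\eta(M)>0$, I would first record that for every $x\in X$ the definition of $\eta(M)$ as an infimum gives $\eta(x)\ge\eta(M)=\varepsilon$, while the definition of $\eta(x)$ as an infimum gives $\eta(x)\le d(x,y)$ for each fixed $y\neq x$. Chaining these two bounds yields $d(x,y)\ge\varepsilon$ for all distinct $x,y\in X$. This uniform separation is the engine of the whole argument.

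With it in hand, discreteness is immediate. For each $x\in X$ the open ball $B(x,\varepsilon)=\{y\in X:d(x,y)<\varepsilon\}$ can contain no point other than $x$, since any $y\neq x$ satisfies $d(x,y)\ge\varepsilon$; hence $B(x,\varepsilon)=\{x\}$, so every singleton is open. Because an arbitrary union of open sets is open and every subset $A\subseteq X$ equals the union $\bigcup_{x\in A}\{x\}$ of its singletons, every subset of $X$ is open, which is exactly the defining condition for $M$ to be a discrete metric space.

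For completeness I would take an arbitrary Cauchy sequence $(x_n)$ and apply the Cauchy condition with tolerance $\varepsilon$: there exists $N$ such that $d(x_m,x_n)<\varepsilon$ whenever $m,n\ge N$. The separation property forces $x_m=x_n$ in this range, so the sequence is constant from index $N$ onward and therefore converges to $x_N$. Thus every Cauchy sequence converges, and $M$ is complete.

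I do not expect a genuine obstacle in this argument; the only step that requires care is the correct manipulation of the two nested infima, namely verifying that the global bound $\eta(M)\le\eta(x)$ and the pointwise bound $\eta(x)\le d(x,y)$ combine to give $d(x,y)\ge\varepsilon$ for all distinct $x,y$. Once that uniform separation is secured, discreteness and completeness both follow by the routine arguments sketched above.
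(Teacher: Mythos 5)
Your proof is correct and follows essentially the same route as the paper's: both arguments rest on the observation that $\eta(M)>0$ forces distinct points to be at distance at least $\eta(M)$, which makes singletons open (hence the space discrete) and forces every Cauchy sequence to be eventually constant (hence the space complete). The only difference is presentational: you isolate the uniform separation $d(x,y)\ge\eta(M)$ as an explicit preliminary step, whereas the paper invokes it implicitly via a contradiction inside the Cauchy-sequence argument.
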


  \begin{proof}
If $\eta(M)> 0$, then  every singleton is an open set. Hence, since the union of any collection of open sets is itself an open set, from  $\eta(M)> 0$ we deduce that $M$ is a discrete metric space.

 Let $(x_n)_{n\in \mathbb{N}}$ be a Cauchy sequence in $M$. By definition
  of Cauchy sequence, for $\epsilon=\eta(M)>0$ there exists $N\in \mathbb{N}$ such that for any $n,m>N$ we have $d(x_n,x_m)<\epsilon=\eta(M)$. Now, if  $x_n\ne x_m$, for some $n,m>N$, then by definition of nearness we  have $\eta(M)\le d(x_n,x_m)<\eta(M)$, which is a contradiction. Thus, $(x_n)_{n\in \mathbb{N}}$ is constant from  some  $N\in \mathbb{N}$, which implies that $(x_n)_{n\in \mathbb{N}}$ converges in $M$. Therefore, $M$ is a complete metric space.\end{proof}

 In general, the converse of Theorem \ref{ConsequenceNearness0} is not true.
 For instance, the metric space $M=(U,d)$, where
 $$U=\left\{2+\frac{1}{2},2+\frac{1}{2-1},3+\frac{1}{3},3+\frac{1}{3-1},\dots, n+\frac{1}{n},n+\frac{1}{n-1}, \dots\right\}$$ and $d$ is the metric inherited from the one-dimensional Euclidean space, is a complete discrete metric space, as any Cauchy sequence is constant from a given $N\in \mathbb{N}$. On the other hand, since $$d\left(n+\frac{1}{n},n+\frac{1}{n-1}\right)=\left|\left(n+\frac{1}{n}\right)-\left(n+\frac{1}{n-1}\right)\right|=\frac{1}{n(n-1)}\longrightarrow 0,
$$
we have that
 $\eta(M)=0$.

Notice that the vertex set $V$ of any  connected graph $G=(V,E)$ equipped with the shortest path distance is a discrete metric space with nearness equal to one.

 \begin{definition}
Let $M=(X,d_X)$ and $M'=(Y,d_Y)$ be two metric spaces such that $\eta(M)>0$. A  map $\rho: X\times Y\longrightarrow \mathbb{R}$  defined by
$$\rho((x,y),(x',y'))=\left\{\begin{array}{ll}

                            d_X(x,x'), & \text{ if } x\ne x', \\
                            \\
                          \displaystyle \min\{2\eta(x), d_Y(y,y')\}, & \text{ if }  x=x'.
                            \end{array} \right.
$$ is called a lexicographic distance, and the metric space $M\circ M'=(X\times Y, \rho)$ is called a lexicographic metric space.
\end{definition}

By a case study we can check that the map $\rho$ is a distance.
In order to provide a formula for  the diameter\footnote{The diameter of $M=(X,d)$ is defined to be $D(M)=\sup\{d(x,y):\; x,y\in X\}$.} of $M\circ M'$, we need to introduce the following parameter that we call the \emph{slack of a metric space} $M$:
$$\zeta(M)=\sup \{ \eta(x):\;  x\in X \}.$$
As a direct consequence of the definition of lexicographic distance,  the diameter of $M\circ M'$ is given by
$$D(M\circ M')=\max\{D(M), \min\{2\zeta(M),D(M')\}\}.$$

The name lexicographic metric space is inherited from graph theory.   Let us briefly recall the notion of the lexicographic product of two graphs.  The   lexicographic product $G \circ H$ of two graphs
$G$ and $H$
is the graph  whose vertex set is  $V(G \circ H)=  V(G)  \times V(H )$ and $(u,v)(x,y) \in E(G \circ H)$ if and only if $ux \in E(G)$ or $u=x$ and $vy \in E(H)$. For basic properties of the lexicographic product of two graphs we suggest the  books  \cite{Hammack2011,Imrich2000}. We denote by $d_G$ the shortest path distance of any graph $G$. It is well known that the shortest path distance in $G\circ H$ is expressed in terms of the shortest path distances in $G$ and $H$:
$$d_{G\circ H}((x,y),(x',y'))=\left\{\begin{array}{ll}

                            d_G(x,x'), & \text{ if } x\ne x', \\
                            \\
                          \displaystyle  \min\{2, d_H
                          (y,y')\}, & \text{ if }  x=x'.
                            \end{array} \right.
$$
Therefore, the set of vertices of any lexicographic product graph equipped with the shortest path distance is a lexicographic metric space, thus justifying the terminology used.

Notice that for any $x\in X$  the subspace $M'_x=(Y_x,d^x_Y)$ of $M\circ M'$ induced by the set $Y_x=\{x\}\times Y$ is equipped with the distance $d^x_Y((x,y),(x,y'))=\min\{2\eta(x),d_Y(y,y')\}$.  In fact, the metric space $M'_x$ is isometric to a metric space obtained from $M'$, which can be seen as a deformation of $M'$ where every point  $y\in Y$ attracts (and is attracted by)  every point
that is outside  the closed ball of center $y$ and radius $2\eta(x)$.  All these points remain  at distance  $2\eta(x)$ from $y$ in the deformed space. In particular, if the diameter of $M'$ satisfies $D(M')\le 2\eta(x)$, then $M'_x$ is isometric to $M'$. The metric space $M'_x$ locally, near every point, looks like patches of the space $M'$, but the global topology can be quite different.
From now on, we will refer to $M'_x$ as a \emph{gravitational space} of $M'=(Y,d_Y)$ with gravitation constant $2\eta(x)$. In Section 4 we will show that  gravitational metric spaces play an important role in the study of the metric dimension of lexicographic metric spaces.

Since a gravitational metric space of $M'=(Y,d_Y)$ only depends on its gravitational constant and on the structure of $M'$, it has his own identity, regardless of the definition of lexicographic metric space. For this reason, if there is no ambiguity, we will sometimes denote it by $M'_t=(Y,d_t)$, where $t>0$ is a constant and
\begin{equation}\label{GravitationalMetric}
d_t(y_i,y_j)=\min\{2t,d_Y(y_i,y_j)\}\text{ for every } y_i,y_j\in Y.
\end{equation}
 In fact, this metric has been used in the context of graph theory to study the metric dimension of lexicographic product graphs and corona product graphs \cite{Estrada-Moreno2014a,Estrada-Moreno2014b,RV-F-2013}. Moreover, a general study of the metric dimension of any graph equipped with this distance was proposed in \cite{AlejandroEstrada-MorenoRodriguez-Velazquez2016} and a general study in the context of metric spaces was initiated in \cite{Beardon-RodriguezVelazquez2016}.

\section{Completeness, boundedness, compactness and separability of lexicographic metric spaces and gravitational metric spaces}\label{SectionPropertiesLexicMetricSpaces}

To begin this section we proceed to state a criteria  for completeness of any gravitational metric space.

\begin{theorem}\label{ThCompletnessGravitationalSpace}
Let $t>0$ be a real number. A metric space $M=(X,d)$ is complete if and only if the gravitational metric space $M_t=(X, d_t)$ is complete.
\end{theorem}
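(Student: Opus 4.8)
The plan is to show that the metrics $d$ and $d_t$ produce exactly the same Cauchy sequences and the same convergent sequences (with the same limits), from which the equivalence of completeness follows at once.

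The one observation that drives everything is that, since $d_t(x,y)=\min\{2t,d(x,y)\}$, we always have $d_t\le d$, and moreover the two metrics coincide on short distances. Precisely, I would first verify that $d(x,y)<2t$ holds if and only if $d_t(x,y)<2t$, and that whenever either inequality holds we in fact have $d_t(x,y)=d(x,y)$. The forward direction is immediate from the definition; conversely, if $d(x,y)\ge 2t$ then $d_t(x,y)=2t$, so $d_t(x,y)<2t$ forces $d(x,y)<2t$ and hence $d_t(x,y)=d(x,y)$.

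Next I would use this to prove that a sequence $(x_n)$ is Cauchy in $(X,d)$ if and only if it is Cauchy in $(X,d_t)$. One direction is free from $d_t\le d$: any $d$-Cauchy sequence is immediately $d_t$-Cauchy. For the converse, given $\epsilon>0$ it suffices to treat the case $\epsilon<2t$ (a larger $\epsilon$ may be replaced by, say, $t$, giving a stronger requirement); then a tail with $d_t(x_n,x_m)<\epsilon<2t$ automatically satisfies $d(x_n,x_m)=d_t(x_n,x_m)<\epsilon$ by the observation above. The identical argument, applied to the distances $d(x_n,x)$ and $d_t(x_n,x)$, shows that $(x_n)$ converges to a point $x$ in $(X,d)$ if and only if it converges to $x$ in $(X,d_t)$.

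Finally I would assemble the conclusion. Completeness of $M$ asserts that every $d$-Cauchy sequence has a $d$-limit, while completeness of $M_t$ asserts that every $d_t$-Cauchy sequence has a $d_t$-limit; since the two metrics share the same Cauchy sequences and the same convergent sequences with identical limits, each statement implies the other. The only point requiring any care — and the closest thing to an obstacle — is the restriction to $\epsilon<2t$ when passing from $d_t$ back to $d$, which is exactly what makes the capping at $2t$ invisible to the purely local notions of being Cauchy and of convergence.
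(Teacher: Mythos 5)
Your proposal is correct and follows essentially the same route as the paper: both rest on the observation that $d$ and $d_t$ coincide at scales below $2t$ (equivalently, balls of radius $\epsilon<2t$ agree), so the two metrics have the same Cauchy sequences and the same convergent sequences. In fact your write-up is slightly more complete than the paper's, which details only one direction and omits the other as ``quite similar.''
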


\begin{proof}
The result mainly follows from the fact that for any $\epsilon\in (0,2t)$ and any $x\in X$, the  ball of center $x$ and radius $\epsilon$ in $M$ coincides with the  ball of center $x$ and radius $\epsilon$ in $M_t$.

We proceed to give the detail of the proof in one direction. Assume that $M_t$ is a complete metric space. Let $(x_n)_{n\in \mathbb{N}}$ be a Cauchy sequence in $M$. Hence, for any $\epsilon\in (0,2t)$, there exists $N\in \mathbb{N}$ such that  $d(x_n,x_m)<\epsilon$ for every $n,m>N$. Now, since $d(x_n,x_m)<\epsilon<2t$, we have that $d_t(x_n,x_m)=d(x_n,x_m)<\epsilon$, which  implies that $(x_n)_{n\in \mathbb{N}}$ is a Cauchy sequence in $M_t$. Therefore, since $(x_n)_{n\in \mathbb{N}}$ converges in $M_t$, it also converges in  $M$.

The proof in the other direction is quite similar. We omit the details.
\end{proof}

As we will show in the next result, the  completeness of $M\circ M'$ depends on the completeness of $M'$.

\begin{theorem}\label{Th-Completness}
A lexicographic metric space $M\circ M'$ is complete if and only if $M'$ is complete.
\end{theorem}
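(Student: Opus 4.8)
The plan is to exploit the hypothesis $\eta(M)>0$ to reduce the completeness of $M\circ M'$ to the completeness of a single fiber $\{x^*\}\times Y$, which is precisely a gravitational space of $M'$, and then to invoke Theorem \ref{ThCompletnessGravitationalSpace}. The key structural fact driving both directions is that $\eta(M)>0$ forces the $X$-coordinate to behave discretely.

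For the forward implication, suppose $M'$ is complete and let $((x_n,y_n))_{n\in\mathbb{N}}$ be a Cauchy sequence in $M\circ M'$. First I would show that the first coordinate is eventually constant: taking $\epsilon=\eta(M)>0$, there exists $N\in\mathbb{N}$ such that $\rho((x_n,y_n),(x_m,y_m))<\eta(M)$ for all $n,m>N$; but whenever $x_n\ne x_m$ we have $\rho((x_n,y_n),(x_m,y_m))=d_X(x_n,x_m)\ge\eta(x_n)\ge\eta(M)$, a contradiction. Hence $x_n=x^*$ for all $n>N$ and some fixed $x^*\in X$. The tail of the sequence then lives in the fiber $\{x^*\}\times Y$, whose induced metric $d^{x^*}_Y$ makes it isometric to the gravitational space $M'_{x^*}$ with gravitation constant $2\eta(x^*)$. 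Since $\eta(x^*)\ge\eta(M)>0$ and $M'$ is complete, Theorem \ref{ThCompletnessGravitationalSpace} guarantees that $M'_{x^*}$ is complete, so $(y_n)_{n>N}$ converges to some $y^*\in Y$ in $M'_{x^*}$; it follows that $(x_n,y_n)\to(x^*,y^*)$ in $M\circ M'$.

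For the converse, suppose $M\circ M'$ is complete and fix any $x^*\in X$. As above, the fiber $F=\{x^*\}\times Y$, equipped with the induced metric, is isometric to the gravitational space $M'_{x^*}$. I would show that $F$ is closed in $M\circ M'$: if a sequence in $F$ converges to $(a,b)$, then, since every term has first coordinate $x^*$ and $\rho((x^*,y_n),(a,b))=d_X(x^*,a)$ whenever $a\ne x^*$, a constant positive value cannot tend to $0$, so the limit must satisfy $a=x^*$ and hence $(a,b)\in F$. As a closed subspace of a complete metric space is complete, $M'_{x^*}$ is complete, and Theorem \ref{ThCompletnessGravitationalSpace} then yields that $M'$ is complete.

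The only genuinely substantive point in each direction is the discreteness of the $X$-coordinate: that $\eta(M)>0$ collapses every Cauchy sequence into a single fiber and makes each fiber closed. Once this is established, the problem reduces entirely to the gravitational space $M'_{x^*}$, and the remaining work is the routine $\epsilon$-comparison between $\rho$ restricted to a fiber and $d_Y$ (namely that for $\epsilon<2\eta(x^*)$ one has $\min\{2\eta(x^*),d_Y(y,y')\}<\epsilon$ if and only if $d_Y(y,y')<\epsilon$), which is exactly what Theorem \ref{ThCompletnessGravitationalSpace} already packages. I expect no real obstacle beyond carefully handling this fixed-fiber reduction.
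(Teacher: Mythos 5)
Your proof is correct, but it is organized differently from the paper's. The paper argues both directions directly from the definition of $\rho$: in the forward direction it shows, exactly as you do, that the first coordinate is eventually constant, but then it verifies inline that $\min\{2\eta(x_n),d_Y(y_n,y_m)\}<\epsilon<2\eta(x_n)$ forces $d_Y(y_n,y_m)<\epsilon$, so that $(y_n)$ is Cauchy in $M'$ itself; in the converse it fixes $x\in X$ and lifts an arbitrary Cauchy sequence $(y_n)$ of $M'$ to the Cauchy sequence $((x,y_n))$ in $M\circ M'$. It never invokes Theorem~\ref{ThCompletnessGravitationalSpace}. You instead factor both directions through that theorem, viewing each fiber $\{x^*\}\times Y$ as an isometric copy of the gravitational space $M'_{\eta(x^*)}$ (legitimate, since $\eta(x^*)\ge\eta(M)>0$), and in the converse you replace the paper's sequence-lifting by the observation that each fiber is closed in $M\circ M'$ (cross-fiber distances equal the fixed positive number $d_X(x^*,a)$), hence complete as a closed subspace of a complete space. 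Your route is more modular: the $\min$-versus-$d_Y$ comparison is done once, inside Theorem~\ref{ThCompletnessGravitationalSpace}, rather than redone here. It also makes explicit a point the paper's converse leaves tacit: the paper's claim that $((x,y_n))$ ``converges if and only if $(y_n)$ converges in $M'$'' silently uses the fact that a limit of points of a fiber must lie in that fiber, which is precisely your closedness argument. The paper's proof, in exchange, is self-contained and slightly shorter, relying only on the definitions of nearness and of $\rho$.
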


\begin{proof}We first assume that $M'=(Y,d_Y)$ is a complete metric space. Let $(z_n)_{n\in \mathbb{N}}$ be a Cauchy sequence in $M\circ M'$, where $z_n=(x_n,y_n)$ for every $n\in  \mathbb{N}$. By definition of Cauchy sequence, for every $\epsilon\in (0,\eta(M))$ there exists $N\in \mathbb{N}$ such that $\rho(z_n,z_m)< \epsilon$ for every $n,m>N$. Now, by definition of nearness,  if $x_n\ne x_m$, then  $\rho(z_n,z_m)=d_X(x_n,x_m)\ge \eta(M)>\epsilon$, which is a contradiction. Thus, $(x_n)_{n\in \mathbb{N}}$ is constant from $N$ and $\epsilon>\rho(z_n,z_m)=\min\{2\eta(x_n),d_Y(y_n,y_m)\}=d_Y(y_n,y_m)$, for every $n,m>N$.  As a result, $(y_n)_{n\in \mathbb{N}}$ is a Cauchy sequence
which converges in $M'$ if and only if $(z_n)_{n\in \mathbb{N}}$ converges in $M\circ M'$.
Therefore, from the completeness of the space  $M'$ we deduce the completeness of  $M\circ M'$.

Now, assume that $M\circ M'$ is a complete metric space. By a procedure similar to that developed above, by  fixing $x\in X$ and taking $\epsilon\in (0,\eta(x))$,  we deduce that for any Cauchy sequence  $(y_n)_{n\in \mathbb{N}}$ in $M'$,  the sequence $((x,y_n))_{n\in \mathbb{N}}$ is a Cauchy sequence in $M\circ M'$ which converges if and only if $(y_n)_{n\in \mathbb{N}}$ converges in $M'$. Therefore, from the completeness of the space $M\circ M'$ we deduce the completeness of $M'$.
\end{proof}

The concept of boundedness plays an important role in the theory of metric spaces. Whereas a metric space is \emph{bounded} if it is included in a single ball, a metric space $M=(X,d)$ is \emph{totally bounded} if for every $\epsilon>0$ there exists a finite set $S\subseteq X$ such that
$X=\cup_{s\in S} B_{\epsilon}(s)$, where $$B_{\epsilon}(s)=\{x\in X:\; d(x,s)<\epsilon\}$$ is the ball of center $s$ and radius $\epsilon$.
In the proof of Theorems~\ref{LemmaGravitationalTotallyBounded} and \ref{Th-TotallyBounded} we use the simple fact that  for positive numbers $\epsilon <\epsilon'$ and $S\subseteq  X$, if $X=\cup_{s\in S} B_{\epsilon}(s)$, then  $X=\cup_{s\in S} B_{\epsilon'}(s)$.

\begin{theorem}\label{LemmaGravitationalTotallyBounded}
Let $t>0$ be a real number. A metric space $M=(X,d)$ is totally bounded if and only if the gravitational metric space $M_t=(X, d_t)$ is totally bounded.
\end{theorem}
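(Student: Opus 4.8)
The plan is to follow the blueprint of Theorem~\ref{ThCompletnessGravitationalSpace}, exploiting the very same coincidence of balls. First I would record the key observation: for any $\epsilon\in(0,2t)$ and any $x\in X$, the ball $B_\epsilon(x)$ in $M$ coincides with the ball $B_\epsilon(x)$ in $M_t$. Indeed, if $d(x,y)<\epsilon<2t$ then $d_t(x,y)=\min\{2t,d(x,y)\}=d(x,y)<\epsilon$, and conversely if $d_t(x,y)<\epsilon<2t$ then the minimum defining $d_t(x,y)$ cannot equal $2t$, so again $d_t(x,y)=d(x,y)<\epsilon$. Hence, for radii $\epsilon<2t$, covering $X$ by $\epsilon$-balls in $M$ is literally the same problem as covering $X$ by $\epsilon$-balls in $M_t$.

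Next I would reduce the verification of total boundedness to these small radii. Total boundedness requires a finite $\epsilon$-net for every $\epsilon>0$, but by the monotonicity remark stated just before the theorem (if $X=\cup_{s\in S}B_\epsilon(s)$ and $\epsilon<\epsilon'$, then $X=\cup_{s\in S}B_{\epsilon'}(s)$), it suffices to produce a finite $\epsilon$-net for every $\epsilon\in(0,2t)$: any larger radius is then covered by the same net, using a fixed auxiliary value such as $\epsilon'=t\in(0,2t)$. This reduction applies verbatim in both $M$ and $M_t$.

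Combining the two ingredients yields both directions at once. Assuming $M$ is totally bounded, fix $\epsilon\in(0,2t)$; total boundedness of $M$ provides a finite $S\subseteq X$ with $X=\cup_{s\in S}B_\epsilon(s)$ in $M$, and by the ball coincidence the same $S$ is an $\epsilon$-net in $M_t$. The reduction to small radii then delivers finite $\epsilon$-nets in $M_t$ for all $\epsilon>0$, so $M_t$ is totally bounded. The converse is symmetric: starting from finite $\epsilon$-nets in $M_t$ for $\epsilon\in(0,2t)$, the same sets are $\epsilon$-nets in $M$, and the reduction again extends this to every radius.

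I do not expect a genuine obstacle here; the only point requiring care is the regime $\epsilon\ge 2t$, where the large-radius $M_t$-balls swallow the whole space and the $M$- and $M_t$-balls no longer coincide. This is precisely the trivial direction of total boundedness, and the monotonicity trick disposes of it without any separate argument, so the entire content of the proof lies in the ball coincidence for small radii.
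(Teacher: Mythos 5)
Your proposal is correct and follows essentially the same route as the paper: the paper's proof also rests entirely on the coincidence $B_{\epsilon}(x)=B^t_{\epsilon}(x)$ for $0<\epsilon<2t$, combined with the monotonicity remark stated just before the theorem to dispose of radii $\epsilon\ge 2t$. You have merely written out the details that the paper leaves implicit.
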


\begin{proof}
In this proof, balls in $M$ are denoted by $B$, while balls in $M_t$ are denoted by $B^t$.
The result immediately follows from the fact that $B_{\epsilon}(x)=B^t_{\epsilon}(x)$ for every $x\in X$ and $0<\epsilon<2t$.
\end{proof}

 \begin{theorem}\label{Th-TotallyBounded}
Let $M=(X,d)$ be a metric spaces with $\eta(M)>0$, and let $M'$ be a metric space. The lexicographic metric space $M\circ M'$ is totally bounded if and only if $|X|<+\infty$ and $M'$ is totally bounded.
\end{theorem}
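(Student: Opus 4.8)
The plan is to prove both implications by reducing everything to the behaviour of the fibers $M'_x=(\{x\}\times Y,d^x_Y)$, each of which is isometric to the gravitational metric space $M'_{\eta(x)}=(Y,d_{\eta(x)})$ with gravitation constant $2\eta(x)$, and then to invoke Theorem~\ref{LemmaGravitationalTotallyBounded} to pass between total boundedness of $M'$ and of its gravitational spaces. Throughout I use two standard facts: a subspace of a totally bounded space is totally bounded, and a finite union of totally bounded subspaces is totally bounded. Note also that $\eta(x)\ge \eta(M)>0$ for every $x\in X$, so Theorem~\ref{LemmaGravitationalTotallyBounded} is always applicable to the fibers; the hypothesis $\eta(M)>0$ is moreover what keeps the fibers uniformly separated and is the crucial ingredient for controlling the first coordinate.

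For the sufficiency, suppose $|X|<+\infty$ and $M'$ is totally bounded. Writing $X=\{x_1,\dots,x_k\}$, I decompose $M\circ M'=\bigcup_{i=1}^{k}(\{x_i\}\times Y)$ as a finite union of fibers. Since $\eta(x_i)>0$ and $M'$ is totally bounded, Theorem~\ref{LemmaGravitationalTotallyBounded} applied with $t=\eta(x_i)$ shows that the gravitational space $M'_{\eta(x_i)}$, and hence the isometric fiber $M'_{x_i}$, is totally bounded. Given $\epsilon>0$, I cover each fiber by finitely many $\epsilon$-balls centered at points of that fiber; since a ball taken inside a fiber is contained in the corresponding ball of $M\circ M'$, the union of all these finitely many balls covers $M\circ M'$, which is therefore totally bounded.

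For the necessity, assume $M\circ M'$ is totally bounded. First I show $|X|<+\infty$. Fix $\epsilon$ with $0<\epsilon\le \eta(M)/2$ and take a finite set $S$ with $X\times Y=\bigcup_{s\in S}B_\epsilon(s)$. If $(x,y)$ and $(x',y')$ lie in a common ball $B_\epsilon(s)$, then by the triangle inequality $\rho((x,y),(x',y'))<2\epsilon\le \eta(M)$; were $x\ne x'$ we would have $\rho((x,y),(x',y'))=d_X(x,x')\ge \eta(M)$, a contradiction, so every ball $B_\epsilon(s)$ lies in a single fiber. Consequently each point $(x,y)$ lies in some $B_\epsilon(s)$ whose center $s$ then has first coordinate $x$, so that at most $|S|$ distinct first coordinates occur; hence $|X|\le |S|<+\infty$. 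Next, fixing any $x\in X$, the fiber $M'_x$ is a subspace of the totally bounded space $M\circ M'$ and is thus totally bounded. Since it is isometric to the gravitational space $M'_{\eta(x)}$, Theorem~\ref{LemmaGravitationalTotallyBounded} (with $t=\eta(x)$) then yields that $M'$ is totally bounded.

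The routine parts are the passage to subspaces and finite unions; the one step requiring care is the proof that $|X|<+\infty$, where the exact threshold $\epsilon\le\eta(M)/2$ must be chosen so that the diameter bound $2\epsilon\le \eta(M)$ forces each covering ball into a single fiber. This is precisely where the hypothesis $\eta(M)>0$ is indispensable: without it the fibers could accumulate and infinitely many first coordinates could be packed into one small ball, so that $M\circ M'$ could be totally bounded with $|X|$ infinite.
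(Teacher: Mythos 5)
Your proof is correct and follows essentially the same route as the paper's: both arguments hinge on choosing $\epsilon$ below the threshold $\eta(M)/2$ so that every $\epsilon$-ball is confined to a single fiber $\{x\}\times Y$, decomposing $M\circ M'$ into fibers, and then invoking Theorem~\ref{LemmaGravitationalTotallyBounded} to identify total boundedness of each fiber (a gravitational space $M'_{\eta(x)}$) with that of $M'$. Your version merely separates the two implications that the paper compresses into a single chain of equivalences, which if anything makes the argument a bit more explicit.
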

 \begin{proof}

Let $M'=(Y,d_Y)$,  $0<\epsilon<\eta(M)/2$ and $S\subseteq X\times Y$ such that $X\times Y=\cup_{s\in S}B_{\epsilon}(s)$. Since for any pair of different points $x_1,x_2\in X$ and  $y_1,y_2\in Y$, we have that $B_{\epsilon}((x_1,y_1))\cap B_{\epsilon}((x_2,y_2))=\emptyset$, we can conclude that $|S|<+\infty$ if and only if (a) $|X|<+\infty$ and (b) for each $x\in X$ there exist $S_x=\{x\}\times S' \subseteq S$  such that $|S_x|<+\infty$ and $\{x\}\times Y=\cup_{s\in S_x}B_{\epsilon}(s)$. Notice that (b) is equivalent to say that  the  gravitational metric space $M'_x$ is totally bounded for every $x\in X$.  Therefore, by Theorem \ref{LemmaGravitationalTotallyBounded} we conclude the proof.
\end{proof}

We now proceed to study the compactness of lexicographic metric spaces.
The following known theorem, which is a characterization of compact metric spaces, is  as a generalization of the Heine-Borel theo\-rem.

\begin{theorem}{\rm \cite{MR0267052}}\label{Heine-Borel}
A metric space is compact if and only if it is complete and totally bounded.
\end{theorem}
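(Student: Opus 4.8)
The plan is to prove both implications, using sequential compactness as the bridge, since in a metric space compactness and sequential compactness coincide. For the forward direction I would assume $M=(X,d)$ is compact. Total boundedness is then immediate: for any $\epsilon>0$ the family $\{B_{\epsilon}(x):x\in X\}$ is an open cover, and any finite subcover yields a finite set $S\subseteq X$ with $X=\cup_{s\in S}B_{\epsilon}(s)$. For completeness I would route through sequential compactness: given a Cauchy sequence $(x_n)_{n\in\mathbb{N}}$, compactness provides a convergent subsequence, and a Cauchy sequence that admits a convergent subsequence converges to the same limit. Hence $M$ is complete and totally bounded.

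For the converse, assume $M$ is complete and totally bounded. I would first establish sequential compactness and then pass to the open-cover definition. To extract a convergent subsequence from an arbitrary sequence $(x_n)_{n\in\mathbb{N}}$, I would run a diagonal argument driven by total boundedness: cover $X$ by finitely many balls of radius $1$; one of them contains $x_n$ for infinitely many indices $n$, which singles out a subsequence lying in that ball. Cover $X$ by finitely many balls of radius $1/2$; one captures infinitely many terms of the previous subsequence; then iterate with radii $1/2^k$. The resulting diagonal subsequence has the property that its tail from stage $k$ lies in a ball of radius $1/2^k$, so it is Cauchy, and completeness supplies its limit.

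The main obstacle is not the Cauchy-subsequence extraction, which is routine once total boundedness is available, but the passage from sequential compactness back to the open-cover formulation of compactness. I would handle this via the Lebesgue number lemma: a sequentially compact metric space admits, for every open cover, a number $\delta>0$ such that every subset of diameter less than $\delta$ lies inside a single member of the cover; combining this with total boundedness, by covering $X$ with finitely many balls of radius $\delta/2$ (each contained in one cover element), produces the desired finite subcover. Alternatively, one may argue by contradiction through a nested bisection of balls whose diameters tend to zero and none of which admits a finite subcover, choosing one point in each to form a Cauchy sequence whose limit lies in some cover element that eventually engulfs one of the nested sets, the sought contradiction. Either route closes the equivalence.
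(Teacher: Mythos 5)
Your proposal is correct, but note that the paper itself offers no proof of this statement: it is quoted as a known result (a generalization of the Heine--Borel theorem) with a citation to Kolmogorov and Fomin, and is then used as a black box to derive Theorems \ref{ThGravitationalCompact} and \ref{CompactLexicographicMetricSpaces}. So your argument cannot be compared with an ``in-paper'' proof; it can only be judged on its own merits, and on those it stands. Both directions are the standard ones: compactness gives total boundedness via the cover by $\epsilon$-balls and gives completeness via a convergent subsequence of any Cauchy sequence; conversely, the diagonal extraction over balls of radii $1/2^{k}$ produces a Cauchy subsequence of an arbitrary sequence, completeness yields sequential compactness, and either the Lebesgue number lemma or the nested-ball contradiction argument converts sequential compactness into open-cover compactness. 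The only point you should flag if you write this out in full is that the Lebesgue number lemma is itself a consequence of sequential compactness and needs its own (short, standard) proof --- one extracts a convergent subsequence from putative counterexample points with $\delta=1/n$ --- so it is not a free ingredient; your alternative bisection route avoids this dependency entirely and is therefore the more self-contained of the two closings you sketch.
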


 From Theorems \ref{ThCompletnessGravitationalSpace}, \ref{LemmaGravitationalTotallyBounded}   and \ref{Heine-Borel} we deduce the following result.

 \begin{theorem}\label{ThGravitationalCompact}
Let $t>0$ be a real number. A metric space $M=(X,d)$ is compact if and only if the gravitational metric space $M_t=(X, d_t)$ is compact.
\end{theorem}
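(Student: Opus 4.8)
The plan is to deduce the equivalence directly from the Heine--Borel characterization in Theorem \ref{Heine-Borel} together with the two preceding gravitational results, by chaining biconditionals on the two defining properties ``complete'' and ``totally bounded.''

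First I would invoke Theorem \ref{Heine-Borel} applied to $M$ itself: the space $M$ is compact if and only if $M$ is both complete and totally bounded. Next I would replace each of these two properties by its gravitational counterpart, using Theorem \ref{ThCompletnessGravitationalSpace} to rewrite ``$M$ is complete'' as ``$M_t$ is complete,'' and Theorem \ref{LemmaGravitationalTotallyBounded} to rewrite ``$M$ is totally bounded'' as ``$M_t$ is totally bounded.'' Both substitutions are biconditionals, so the conjunction is preserved, giving that $M$ is complete and totally bounded if and only if $M_t$ is complete and totally bounded.

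Finally I would apply Theorem \ref{Heine-Borel} a second time, now to the space $M_t$, to recognize that ``$M_t$ is complete and totally bounded'' is precisely ``$M_t$ is compact.'' Concatenating the three biconditionals then yields the claimed equivalence: $M$ is compact if and only if $M_t$ is compact.

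Since every ingredient is an already-established two-way implication and the argument is a pure chain of logical equivalences, I do not anticipate any genuine obstacle. The only point requiring care is that the gravitational replacements must be used in both directions (which both Theorem \ref{ThCompletnessGravitationalSpace} and Theorem \ref{LemmaGravitationalTotallyBounded} supply), so that neither direction of the final equivalence is lost; with this observed, the proof reduces to a one-line assembly of the three quoted theorems.
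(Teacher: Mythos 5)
Your proposal is correct and is exactly the paper's argument: the paper derives this theorem as a direct consequence of Theorems \ref{ThCompletnessGravitationalSpace}, \ref{LemmaGravitationalTotallyBounded} and \ref{Heine-Borel}, precisely the chain of biconditionals you describe.
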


 Our next result is a direct consequence of Theorems \ref{Th-Completness},  \ref{Th-TotallyBounded} and \ref{Heine-Borel}.
\begin{theorem}\label{CompactLexicographicMetricSpaces}
Let $M=(X,d)$ be a metric spaces with $\eta(M)>0$, and let $M'$ be a metric space. Then the lexicographic metric space $M\circ M'$ is compact if and only if $|X|<+\infty$ and $M'$ is compact.
\end{theorem}

A set $S$ of points of a metric space $M$ is dense in $M$ if every open ball of $M$ contains at least a point belonging to $S$. A metric space $M$ is separable if there exists a countable set of points which is dense in $M$.
By the same arguments used to get Theorem \ref{LemmaGravitationalTotallyBounded} we deduce the following result.

\begin{theorem}\label{LemmaGravitationalSeparableness}
Let $t>0$ be a real number. A metric space $M=(X,d)$ is separable if and only if the gravitational metric space $M_t=(X, d_t)$ is separable.
\end{theorem}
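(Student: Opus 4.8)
The plan is to exploit exactly the same local coincidence of balls that drove the proof of Theorem~\ref{LemmaGravitationalTotallyBounded}, namely that $B_\epsilon(x)=B^t_\epsilon(x)$ for every $x\in X$ and every $\epsilon$ with $0<\epsilon<2t$, where $B$ denotes balls in $M$ and $B^t$ balls in $M_t$. Since separability is a property of the underlying point set together with the family of open balls, and the two metrics share the same small balls, I expect to conclude that a subset $S\subseteq X$ is dense in $M$ if and only if it is dense in $M_t$. Because the underlying set $X$ is identical in both spaces, this equivalence of density immediately transfers to the existence of a countable dense set, which is what separability demands.

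First I would reduce the density condition to small balls. A set $S$ is dense in $M$ precisely when $B_\epsilon(x)\cap S\neq\emptyset$ for every $x\in X$ and every $\epsilon>0$. Using the monotonicity $B_\epsilon(x)\subseteq B_{\epsilon'}(x)$ for $\epsilon<\epsilon'$, which is the simple fact recalled before Theorem~\ref{LemmaGravitationalTotallyBounded}, it suffices to test this condition for $\epsilon\in(0,2t)$: if every such small ball meets $S$, then so does every larger ball, and conversely the small balls are themselves instances of the general condition. The identical reduction applies in $M_t$.

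Next I would invoke the coincidence of balls. For $x\in X$ and $\epsilon\in(0,2t)$ we have $B_\epsilon(x)=B^t_\epsilon(x)$, so $B_\epsilon(x)\cap S\neq\emptyset$ if and only if $B^t_\epsilon(x)\cap S\neq\emptyset$. Combining this with the reduction of the previous step, $S$ is dense in $M$ if and only if $S$ is dense in $M_t$. Consequently $M$ admits a countable dense set if and only if $M_t$ does, which is the claimed equivalence.

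The argument is routine once the reduction to small balls is in place; the only point requiring care, and the one I would state explicitly rather than leave implicit, is that density may genuinely be verified using balls of radius bounded above by $2t$. This is legitimate because the gravitational metric alters distances only at scales $\ge 2t$ and hence leaves untouched the fine local structure that density detects.
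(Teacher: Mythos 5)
Your proof is correct and follows exactly the approach the paper intends: the paper proves this theorem simply by appealing to ``the same arguments used to get Theorem~\ref{LemmaGravitationalTotallyBounded},'' namely the coincidence $B_\epsilon(x)=B^t_\epsilon(x)$ for $0<\epsilon<2t$, which is precisely the mechanism you spell out. Your explicit reduction of density to balls of radius less than $2t$ fills in the detail the paper leaves implicit.
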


 \begin{theorem}\label{Th-Separable}
Let $M=(X,d)$ be a metric spaces with $\eta(M)>0$, and let $M'$ be a metric space. The lexicographic metric space $M\circ M'$ is separable if and only if $X$ is a countable set and  $M'$ is separable.
\end{theorem}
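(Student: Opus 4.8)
The plan is to mirror the structure of the proof of Theorem~\ref{Th-TotallyBounded}, exploiting the fact that, because $\eta(M)>0$, every sufficiently small open ball of $M\circ M'$ is confined to a single slice $\{x\}\times Y$. The whole argument then reduces, slice by slice, to the gravitational case already settled in Theorem~\ref{LemmaGravitationalSeparableness}. Throughout I assume, as is usual, that the underlying spaces are nonempty, the empty case being trivial.

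First I would record the key geometric fact. For any point $(x,y)\in X\times Y$ and any radius $0<\epsilon\le \eta(M)$, the open ball $B_\epsilon((x,y))$ is contained in the slice $\{x\}\times Y$: indeed, if $x'\ne x$ then $\rho((x,y),(x',y'))=d_X(x,x')\ge \eta(x)\ge \eta(M)\ge \epsilon$, so no point off the slice lies in the ball. Moreover, on $\{x\}\times Y$ the induced metric is exactly $d^x_Y$, so such a ball coincides with the corresponding ball of the gravitational space $M'_x$. Since $\eta(x)\ge \eta(M)>0$, each $M'_x$ is a gravitational space of $M'$ with strictly positive gravitation constant, and hence by Theorem~\ref{LemmaGravitationalSeparableness} the space $M'_x$ is separable if and only if $M'$ is separable.

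For the forward implication I would assume $M\circ M'$ is separable with countable dense set $D$, and put $D_x=D\cap(\{x\}\times Y)$ for each $x\in X$. Fixing any $(x,y)$ and applying density to the ball $B_{\eta(M)}((x,y))$, which by the geometric fact lies in the slice, shows $D_x\ne\emptyset$; repeating with arbitrarily small radii shows $D_x$ is dense in $M'_x$. The sets $\{D_x\}_{x\in X}$ are then pairwise disjoint nonempty subsets of the countable set $D$, so $X$ is countable, and each $D_x$ is a countable dense subset of $M'_x$, whence $M'_x$ is separable and $M'$ is separable by Theorem~\ref{LemmaGravitationalSeparableness}. For the converse I would assume $X$ countable and $M'$ separable; Theorem~\ref{LemmaGravitationalSeparableness} makes each $M'_x$ separable, so I choose a countable dense set $D_x\subseteq\{x\}\times Y$ and set $D=\bigcup_{x\in X}D_x$, a countable union of countable sets. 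Given $(x,y)$ and $\epsilon>0$, the inclusion $B_\epsilon((x,y))\supseteq B_{\min\{\epsilon,\eta(M)\}}((x,y))$ together with the confinement of the smaller ball to the slice and the density of $D_x$ there produces a point of $D$ in $B_\epsilon((x,y))$, so $D$ is dense.

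The step requiring most care is the confinement of small balls to a single slice, which is exactly where the hypothesis $\eta(M)>0$ enters; without it the slices could accumulate and the reduction would break down. Once that is secured, both directions follow cleanly, and the only remaining bookkeeping is observing that countably many slices, each contributing a nonempty piece of the dense set, force $X$ to be countable.
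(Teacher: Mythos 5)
Your proof is correct and follows essentially the same route as the paper: both reduce separability of $M\circ M'$ slice by slice to the gravitational spaces $M'_x$ via Theorem~\ref{LemmaGravitationalSeparableness}, taking a countable dense set in each slice and using that balls of radius at most $\eta(M)$ are confined to a single slice both to prove density of the union and to force $X$ to be countable. The only cosmetic difference is that the paper justifies separability of each slice by citing the general fact that subspaces of separable metric spaces are separable, whereas you derive it directly from the confinement of small balls (i.e., the openness of the slices); this is a minor, if anything more self-contained, variation of the same argument.
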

 \begin{proof}
We proceed to show that $M\circ M'$ is separable if and only if $X$ is a countable set and for any $x\in X$ the gravitational metric space $M'_{\eta(x)}$ is separable. After that, the proof is concluded by Theorem \ref{LemmaGravitationalSeparableness}.

We first assume that for any $x\in X$ the gravitational metric space $M'_{\eta(x)}$ is separable and $X$ is a countable set. For any $x\in X$, let $S_x \subseteq \{x\}\times Y$ be a countable set which is dense  in $M'_{\eta(x)}$. Notice that  $S=\cup_{x\in X}S_x$ is a countable set, as the countable union of countable sets is countable. Now, a ball $B_{\epsilon}((x,y))$ in  $M'_{\eta(x)}$ will be distinguished from a ball ${\bf B}_{\epsilon}((x,y))$ in $M\circ M'$ by the bold type used.
Since  $S_x$ is dense in $M'_{\eta(x)}$, we can conclude that for any $\epsilon>0$ and $y\in Y$,
$${\bf B}_{\epsilon}((x,y))\cap S\supseteq {B}_{\epsilon}((x,y))\cap S_x\ne \emptyset.$$
Hence, $S$ is dense in $M\circ M'$, and so $M\circ M'$ is separable.

Conversely,  assume that $M\circ M'$ is separable. Since every subspace of a separable metric space is separable, we can conclude that $M'_{\eta(x)}$ is separable for every $x\in X$. Finally, if $U$ is a  countable  set which is dense in $M\circ M'$  and   $x\in X$, then $\{x\}\times Y\supseteq{\bf B}_{\eta(M)/2}((x,y))\cap U \ne \emptyset$, which implies that $|X|\le |U|$. Thus, $X$ is a countable set.
 \end{proof}

\section{The metric dimension}

The metric dimension of a general metric space was introduced for the first time by Blumenthal \cite{Blumenthal1953} in 1953. This theory
 attracted little attention until, about twenty years later, it was applied to the
distances between vertices of a graph
\cite{Harary1976,Slater1975,Slater1988}.
 Since then it has been frequently used in graph
theory, chemistry, biology, robotics and many other disciplines.
More recently, in \cite{Sheng2013,Beardon-RodriguezVelazquez2016,MR3276739}, the
theory of metric dimension was developed further for general metric spaces.
Here we develop the idea of the metric dimension  in lexicographic metric spaces. As the theory is trivial when the space
has one point, we shall assume that any space we are
considering has \emph{at least two points}.

Let $M=(X,d)$ be a metric space. If $X$ is a finite set, we denote its
cardinality by $|X|$; if $X$ is an infinite set, we put $|X| =
+\infty$. In fact, it is possible to develop the theory with $|X|$ any
cardinal number, but we shall not do this. The distances from a point
$x$ in $X$ to the points $a$ in a subset $A$ of $X$ are given by the
function $a \mapsto d(x,a)$, and the subset $A$ is said to
\emph{resolve} $M$ if each point $x$ is uniquely determined by this
function.  Thus $A$ resolves $M$ if and only if $d(x,a)=d(y,a)$
for all $a$ in $A$ implies that $x=y$; informally, if an object in
$M$ knows its distance from each point of $A$, then it knows exactly
where it is located in $M$. The class $\mathcal{R}(M)$ of subsets of
$X$ that resolve $M$ is non-empty since $X$ resolves $M$. The
\emph{metric dimension} $\dim(M)$ of $M=(X,d)$ is defined as
$$\dim(M)=\min\{|S|: \; S\in \mathcal{R}(M).\}$$
The sets in $\mathcal{R}(M)$ are
called the \emph{metric generators}, or \emph{resolving subsets}, of
$M$, and $S$ is a \emph{metric basis} of $M$ if $S\in \mathcal{R}(M)$
and $|S|= {\rm dim}(M)$.

This terminology comes from the fact that a metric generator of a metric space $M=(X,d)$
induces a \emph{global co-ordinate system} on $M$. For example,
if $(x_1,\ldots,x_r)$ is an ordered metric generator of $M$, then the
map $\psi:X \to \mathbb{R}^r$ given by
\begin{equation}\label{151123a}
\psi(x)= \Big(d(x,x_1), \ldots,d(x,x_r)\Big)
\end{equation}
is injective (for this vector determines $x$), so that $\psi$ is a
bijection from $X$ to a subset of $\mathbb{R}^r$, and the metric space  inherits its
co-ordinates from this subset. As the following result shows, a stronger conclusion arises when $M$ is a compact metric space.

\begin{theorem}{\rm \cite{Beardon-RodriguezVelazquez2016}}\label{151123d}
If $M$ is a compact metric space with ${\rm dim}(M)=r <
+\infty$, then $M$ is homeomorphic to a compact subspace of the Euclidean space
$\mathbb{R}^r$.
\end{theorem}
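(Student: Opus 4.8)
The plan is to show that the coordinate map $\psi$ introduced in \eqref{151123a} is a homeomorphism from $M$ onto its image $\psi(X)$, and that this image is a compact subspace of $\mathbb{R}^r$. The whole argument rests on the classical fact that a continuous bijection from a compact space onto a Hausdorff space is automatically a homeomorphism, so the work reduces to checking the hypotheses of that principle for $\psi$.

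First I would fix an ordered metric basis $(x_1,\ldots,x_r)$ of $M$, which exists because $\dim(M)=r<+\infty$, and consider the associated map $\psi\colon X\to\mathbb{R}^r$ from \eqref{151123a}. Injectivity of $\psi$ is precisely the resolving property of $(x_1,\ldots,x_r)$ and was already recorded in the discussion preceding the statement. To see that $\psi$ is continuous, I would observe that each coordinate function $x\mapsto d(x,x_i)$ is $1$-Lipschitz, since the triangle inequality yields $|d(x,x_i)-d(y,x_i)|\le d(x,y)$. As every coordinate of $\psi$ is continuous, $\psi$ itself is continuous. Since $M$ is compact and $\psi$ is continuous, the image $\psi(X)$ is a compact---hence closed and bounded---subset of $\mathbb{R}^r$, which already exhibits the candidate compact subspace.

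It then remains to upgrade the continuous bijection $\psi\colon X\to\psi(X)$ to a homeomorphism, that is, to verify that $\psi^{-1}$ is continuous. Here I would invoke (or, if a self-contained argument is preferred, reprove) the standard topological lemma by showing that $\psi$ is a closed map: any closed subset $C\subseteq X$ is compact, being a closed subspace of the compact space $M$; its continuous image $\psi(C)$ is therefore compact; and a compact subset of the Hausdorff space $\mathbb{R}^r$ is closed, hence closed in $\psi(X)$. A bijective closed map has continuous inverse, so $\psi$ is a homeomorphism from $M$ onto the compact subspace $\psi(X)\subseteq\mathbb{R}^r$, as claimed.

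The proof is essentially routine once the coordinate map is available; the only place demanding genuine topological input is this last step, where compactness of $M$ is used in an essential way. Indeed, in the absence of compactness a continuous injection need not be an embedding, so the hypothesis that $M$ be compact is exactly what forces $\psi^{-1}$ to be continuous, and that is the crux of the argument.
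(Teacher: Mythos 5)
Your proof is correct and follows essentially the intended approach: the paper states this theorem as a citation to \cite{Beardon-RodriguezVelazquez2016} without reproducing a proof, but the coordinate map $\psi$ of \eqref{151123a} introduced immediately before the statement is precisely the embedding used there, and your argument (each coordinate is $1$-Lipschitz, so $\psi$ is a continuous injection from a compact space into the Hausdorff space $\mathbb{R}^r$, hence a homeomorphism onto its compact image) is the standard one underlying the cited result. Nothing is missing.
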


As the following result shows, the boundedness of $M$ affect directly the metric dimension of its gravitational metric spaces.

\begin{theorem}{\rm\cite{AlejandroEstrada-MorenoRodriguez-Velazquez2016}}\label{Dim^tUnboundedSpaces}
Let $M=(X,d)$ be a metric space, and $t> 0$ a real number.  If $M$ is unbounded,  then the gravitational metric space $M_t=(X,d_t)$ has metric dimension $\dim(M_t)=+\infty$.
\end{theorem}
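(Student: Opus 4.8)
The plan is to prove the statement at the level of resolving sets: I will show that \emph{no} finite subset of $X$ resolves $M_t$, which immediately forces $\dim(M_t)=+\infty$. The guiding observation is that the gravitational metric caps every distance at $2t$, so a landmark $s$ can distinguish two points only if at least one of them lies strictly inside the ball of radius $2t$ about $s$: if both points are at $d$-distance $\ge 2t$ from $s$, then $d_t(\cdot\,,s)=2t$ for both and $s$ is useless. Hence a finite candidate $S=\{s_1,\dots,s_k\}$ fails the moment I can exhibit two distinct points lying simultaneously outside all of the open balls $B_{2t}(s_1),\dots,B_{2t}(s_k)$.

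Concretely, fix a finite set $S=\{s_1,\dots,s_k\}\subseteq X$ and put $A=X\setminus\bigcup_{i=1}^{k}B_{2t}(s_i)$, where $B_{2t}(s_i)=\{z\in X:\ d(z,s_i)<2t\}$. For any two distinct $x,y\in A$ we have $d(x,s_i)\ge 2t$ and $d(y,s_i)\ge 2t$ for every $i$, and therefore $d_t(x,s_i)=2t=d_t(y,s_i)$ for all $i$. Consequently $S$ does not resolve the pair $(x,y)$. So it suffices to prove that $A$ contains at least two points.

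The key step, and the only place where unboundedness enters, is the claim that $A$ is in fact infinite. I will establish this through the elementary fact that a finite union of bounded sets is bounded. Suppose, for contradiction, that $A$ is bounded. Since each open ball $B_{2t}(s_i)$ is bounded as well, the decomposition $X=A\cup\bigcup_{i=1}^{k}B_{2t}(s_i)$ exhibits $X$ as a finite union of bounded sets; enclosing each such set in a ball and applying the triangle inequality across the finitely many centres then yields a finite upper bound for $d(x,y)$ valid for all $x,y\in X$, contradicting the unboundedness of $M$. Hence $A$ is unbounded, and as every finite metric space is bounded, $A$ must be infinite; in particular $|A|\ge 2$.

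Combining these observations, every finite $S\subseteq X$ fails to resolve some pair of distinct points of $A$, so $\mathcal{R}(M_t)$ contains no finite member and therefore $\dim(M_t)=+\infty$. I anticipate no substantial difficulty here, since the whole argument rests on the capping behaviour of $d_t$ together with the boundedness lemma. The one point deserving care is that ``a finite union of bounded sets is bounded'' is genuinely essential and uses finiteness (it fails for countable unions); it is precisely this finiteness of the candidate resolving set that clashes with the unboundedness of $M$.
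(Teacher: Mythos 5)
Your proof is correct, but there is nothing in the paper itself to compare it against: the paper states this theorem as a quoted result from \cite{AlejandroEstrada-MorenoRodriguez-Velazquez2016} and provides no proof of it. Judged on its own merits, your argument is complete and the logic is assembled in the right order. The capping observation is the correct engine: if $x$ and $y$ both lie outside the open $d$-ball $B_{2t}(s)$, then $d_t(x,s)=2t=d_t(y,s)$, so $s$ cannot separate them. The set $A=X\setminus\bigcup_{i=1}^{k}B_{2t}(s_i)$ is automatically disjoint from $S$, since each $s_i$ lies in its own ball, so the unresolved pair you exhibit consists of two points that $S$ genuinely must separate. The place where unboundedness enters is also correctly isolated: a finite union of bounded sets is bounded, so $A$ cannot be bounded, hence it is infinite and in particular contains two distinct points; this is precisely where the finiteness of the candidate resolving set is exploited, as you note. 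Since every finite $S\subseteq X$ therefore fails to resolve $M_t$, the class $\mathcal{R}(M_t)$ contains no finite member and $\dim(M_t)=+\infty$. A small stylistic alternative: instead of arguing by contradiction with the boundedness lemma, one can fix $x_0\in X$, observe that any $z$ with $d(z,x_0)>2t+\max_i d(x_0,s_i)$ lies in $A$ by the triangle inequality, and use unboundedness twice to produce two distinct such points directly; this is marginally more constructive but mathematically equivalent to what you did.
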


In general, the converse of Theorem~\ref{Dim^tUnboundedSpaces} does not hold. For instance, if $M=(\mathbb{Z},d)$, where $d$ is the discrete metric, then $M$ is bounded and $\dim(M_t)=+\infty$ for every $t>0$.  On the other hand, by Theorem~\ref{Dim^tUnboundedSpaces}, we can claim that if $M$ is a metric space such that $\dim(M_t)<+\infty$, then $M$ is bounded,
 but we can not claim that $M$ is totally bounded, as there are bounded metric spaces that are not totally bounded. Our next result
 provides a sufficient condition for a metric space to be totally bounded.

\begin{theorem}
Let $M$ be a metric space. If $\dim(M_{\epsilon})<+\infty$ for all $\epsilon >0$, then $M$ is totally bounded.
\end{theorem}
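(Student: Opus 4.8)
The plan is to prove the contrapositive: assuming $M=(X,d)$ is \emph{not} totally bounded, I will exhibit a single value of $\epsilon>0$ for which $\dim(M_\epsilon)=+\infty$, contradicting the hypothesis. The starting point is the standard fact that a metric space fails to be totally bounded precisely when, for some $\delta_0>0$, no finite family of balls $B_{\delta_0}(s)$ covers $X$. In that situation a greedy selection produces an infinite set $A=\{a_1,a_2,\dots\}\subseteq X$ with $d(a_i,a_j)\ge \delta_0$ for all $i\ne j$: having chosen $a_1,\dots,a_{k}$, their $\delta_0$-balls do not cover $X$, so one may pick $a_{k+1}$ outside all of them, which forces $d(a_{k+1},a_i)\ge\delta_0$. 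I would then fix $\epsilon=\delta_0/4$ and work in the gravitational space $M_\epsilon=(X,d_\epsilon)$.

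The key structural observation is how $A$ sits inside $M_\epsilon$. Since $d(a_i,a_j)\ge \delta_0 > 2\epsilon$ for $i\ne j$, equation~(\ref{GravitationalMetric}) gives $d_\epsilon(a_i,a_j)=\min\{2\epsilon,d(a_i,a_j)\}=2\epsilon$; that is, in $M_\epsilon$ the points of $A$ form an infinite set that is equilateral at the maximal possible distance $2\epsilon$. Two elementary counting facts then drive the argument. First, a point $s\in X$ can be \emph{close} to at most one $a_i$, where close means $d(a_i,s)<2\epsilon$: if $d(a_i,s)<2\epsilon$ and $d(a_j,s)<2\epsilon$ for some $i\ne j$, the triangle inequality yields $d(a_i,a_j)<4\epsilon=\delta_0$, contradicting $d(a_i,a_j)\ge\delta_0$. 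Second, since $d_\epsilon(a_i,s)=\min\{2\epsilon,d(a_i,s)\}$ never exceeds $2\epsilon$, one has $d_\epsilon(a_i,s)\ne 2\epsilon$ if and only if $s$ is close to $a_i$ in this sense.

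Now let $S$ be any set that resolves $M_\epsilon$. Call $a_i$ \emph{covered} if some $s\in S$ is close to it. If two distinct points $a_i,a_j$ were both uncovered, then $d_\epsilon(a_i,s)=2\epsilon=d_\epsilon(a_j,s)$ for every $s\in S$, so $S$ would fail to distinguish them; hence at most one $a_i$ is uncovered, and infinitely many are covered. Assigning to each covered $a_i$ a witnessing $s(a_i)\in S$ gives a map into $S$ that is injective by the first counting fact, whence $|S|=+\infty$. As $S$ was an arbitrary resolving set, $\dim(M_\epsilon)=+\infty$, the desired contradiction. The main obstacle, and the crux of the proof, is exactly the passage from \emph{``$A$ is equilateral at distance $2\epsilon$''} to \emph{``every resolving set is infinite''}: one must rule out the possibility that points of $X$ lying outside $A$ resolve the pairs of $A$ cheaply, and the calibration $\epsilon=\delta_0/4$ (which makes $4\epsilon=\delta_0$) is precisely what validates the ``at most one close point'' bound and closes this gap. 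Note also that this conclusion is genuinely stronger than what Theorem~\ref{Dim^tUnboundedSpaces} would give by contraposition, since total boundedness is a strictly stronger property than boundedness, so the separated-set construction above is really needed.
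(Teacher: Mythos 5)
Your proof is correct, but it takes a genuinely different route from the paper's. The paper argues directly: for any $\epsilon>0$ it takes a finite metric generator $S$ of $M_{\epsilon}$ and observes that any two points of $X$ lying outside $\bigcup_{s\in S}B_{2\epsilon}(s)$ (balls taken in $M$) have the same distance function to $S$ in $M_{\epsilon}$ --- identically equal to $2\epsilon$ --- so at most one such point can exist; hence $X$ is covered by $|S|+1$ balls of radius $2\epsilon$, and since $\epsilon$ was arbitrary, $M$ is totally bounded. You instead prove the contrapositive: from the failure of total boundedness you extract an infinite $\delta_0$-separated set $A$, calibrate $\epsilon=\delta_0/4$, and show that any resolving set $S$ of $M_{\epsilon}$ must be infinite, because at most one point of $A$ can be uncovered (this is the same indistinguishability observation the paper uses), while the covered points of $A$ inject into $S$ via their witnesses. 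Both arguments pivot on the fact that points at $d$-distance at least $2\epsilon$ from every element of $S$ are mutually indistinguishable in $M_{\epsilon}$; the paper's version is shorter and yields the quantitative covering statement that $X$ is covered by $\dim(M_{\epsilon})+1$ balls of radius $2\epsilon$, whereas yours makes explicit the dual packing bound --- that $\dim(M_{\epsilon})$ is at least the cardinality of any $4\epsilon$-separated family minus one --- at the cost of the extra greedy construction and the injection argument, which the direct proof renders unnecessary.
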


\begin{proof}
Let $S\subseteq X$ be a finite metric generator of $M_{\epsilon}=(X,d_{\epsilon})$. Let $S_1=X\setminus \left(\cup_{x\in S}B_{2\epsilon}(x)\right)$, where $B_{2\epsilon}(x)$ is the ball of center $x$ and radius $2\epsilon$ in $M=(X,d)$. Since $S$ is a metric generator of $M_{\epsilon}$, we have that $|S_1|\le 1$. Therefore, since
$X=\cup_{x\in S\cup S_1}B_{2\epsilon}(x)$ and $|S\cup S_1|$  is a finite set, the result follows.
\end{proof}

We now proceed to study the metric dimension of lexicographic metric spaces. To this end, we need to introduce some additional terminology.
We say that two points $a,b\in X$ are \emph{twins} in the metric space $M=(X,d_X)$ if and only if $d_X(a,c)=d_X(b,c)$ for every $c\in X\setminus \{a,b\}$. We define the \emph{twin equivalence relation} $\equiv$ on $X$ as follows:
$$x \equiv y \leftrightarrow x \text{ and } y \text{ are twins.} $$
Let $\dot{x}\subseteq X$ be a non-singleton  twin equivalence class. Notice that the distance between the elements in $\dot{x}$ is constant. So, let $\ell_x=d_X(x_i,x_j)$, for every $x_i,x_j\in \dot{x}$. Given a metric space $M'=(Y,d_Y)$, let $X_Y$ be the set of non-singleton twin equivalence classes of $M$ such that for any $x\in \dot{x}\in X_Y$ and any  metric basis $S_x$ of the gravitational space $M'_x$ there exists $z\in  Y_x=\{x\}\times Y$ such that $d_{\eta(x)}(z,s)=\ell_x$ for every $s \in S_x$. By definition of metric basis, once fixed $S_x$, if $z$ exists, then it is unique.
With this notation in mind we can state the following result.

\begin{theorem}\label{MainTheoremMD-LexicMetricSpaces}
 If $M=(X,d_X)$ is a metric space with $\eta(M)>0$, then for any metric space $M'=(Y,d_Y)$,
$$\dim(M\circ M')= \sum_{x\in X}\dim(M'_x)+\sum_{\dot{x}\in X_Y}(|\dot{x}|-1).$$
\end{theorem}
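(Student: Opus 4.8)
The plan is to analyze an arbitrary resolving set $S$ of $M\circ M'$ through its intersections with the fibers $Y_x=\{x\}\times Y$, writing $S_x=S\cap Y_x$. The starting point is that, by the definition of $\rho$, the distance from $(x,y)$ to a point $(x'',z)$ with $x''\neq x$ equals $d_X(x,x'')$ and is therefore \emph{independent of $y$}; only points lying in the same fiber see the $Y$-coordinate. This yields three facts I would establish first. (i) Two points $(x,y),(x,y')$ of the same fiber are separated only by points of $S_x$, and $S_x$ separates all of them precisely when, under the isometry $Y_x\cong M'_x$, it is a resolving set of the gravitational space $M'_x$; hence $|S_x|\ge\dim(M'_x)$ for every $x$. (ii) If $x,x'$ lie in different twin classes, there is some $c\notin\{x,x'\}$ with $d_X(x,c)\neq d_X(x',c)$, and any point of the nonempty set $S_c$ separates $(x,y)$ from $(x',y')$; so points in distinct twin classes are automatically resolved. (iii) If $x,x'$ are distinct twins (so $d_X(x,x')=\ell_x$ and every external point is equidistant from them), then only points of $S_x\cup S_{x'}$ can separate $(x,y)$ from $(x',y')$, and a short computation shows they are separated if and only if $d_{\eta(x)}(y,z)\neq\ell_x$ for some $z\in S_x$ or $d_{\eta(x')}(y',z)\neq\ell_x$ for some $z\in S_{x'}$.

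Fact (iii) is the crux, and it is here that $X_Y$ enters. Call $y$ \emph{bad} for $S_x$ if $d_{\eta(x)}(y,z)=\ell_x$ for all $z\in S_x$; since $S_x$ is resolving, each fiber has at most one bad point, and that point never belongs to $S_x$. By (iii), the only unresolved pairs inside a twin class $\dot{x}$ are pairs $\big((x_i,y_i^\ast),(x_j,y_j^\ast)\big)$ where $y_i^\ast,y_j^\ast$ are bad points of two distinct fibers of the class. Consequently $S$ resolves $M\circ M'$ if and only if every $S_x$ resolves $M'_x$ and, within each twin class, \emph{at most one fiber carries a bad point}. The optimization thus decouples completely across twin classes.

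For the lower bound I would argue class by class. On a singleton class the only constraint is $|S_x|\ge\dim(M'_x)$. On a non-singleton class $\dot{x}=\{x_1,\dots,x_k\}$, resolvability forces at least $k-1$ of its fibers to carry no bad point. If $\dot{x}\in X_Y$, then by definition every metric basis of every $M'_{x_i}$ has a bad point, so each bad-point-free fiber must use a resolving set of size at least $\dim(M'_{x_i})+1$; adding the trivial bound $|S_{x_i}|\ge\dim(M'_{x_i})$ for the one remaining fiber gives $\sum_i|S_{x_i}|\ge\sum_i\dim(M'_{x_i})+(k-1)$. If $\dot{x}\notin X_Y$, the $+1$ penalties disappear and the class contributes only $\sum_i\dim(M'_{x_i})$. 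Summing over all classes and using $\sum_{\text{classes}}\sum_{x\in\dot{x}}\dim(M'_x)=\sum_{x\in X}\dim(M'_x)$ yields the claimed lower bound.

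For the matching upper bound I would exhibit an explicit generator: in each fiber pick a metric basis of $M'_x$; for a class $\dot{x}\in X_Y$, leave one fiber with its basis and augment each of the other $k-1$ fibers by a single point, namely the bad point $y^\ast$ of its basis. Adding $y^\ast$ destroys that fiber's bad point, since now $d_{\eta(x)}(y^\ast,y^\ast)=0\neq\ell_x$, and by uniqueness of bad points it creates no new one, so each augmented fiber becomes bad-point-free at the cost of exactly one extra point; for classes not in $X_Y$ one chooses bad-point-free bases from the outset. By the characterization above this set resolves $M\circ M'$ and has cardinality equal to the lower bound. The main obstacle, and the step demanding the most care, is Fact (iii) together with the bad-point bookkeeping: verifying that the $X_Y$ condition is exactly the dividing line between needing and not needing the $|\dot{x}|-1$ extra points, and that the penalty is exactly one point per surplus fiber, hinges on the uniqueness of the bad point and on the fact that the bad point itself always furnishes a cheap way to remove it.
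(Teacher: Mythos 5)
Your proposal is correct and takes essentially the same route as the paper's own proof: the fiber decomposition $S_x=S\cap Y_x$, the lower bound obtained by showing that two fibers of a twin class in $X_Y$ cannot both carry minimum-size resolving sets (since their two ``bad points'' would then be indistinguishable), and the upper bound via metric bases augmented by their unique bad point in all but one fiber of each class in $X_Y$, with bad-point-free bases elsewhere, are exactly the paper's arguments. Your explicit ``bad point'' terminology and the if-and-only-if characterization of resolving sets simply package what the paper verifies inline through its three-case analysis.
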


 \begin{proof}
The result is trivially true if $\dim(M\circ M')=+\infty$, so we can assume that there exists a finite metric basis, say  $W$, of $M\circ M'$. Let $x\in X$ and $y_1,y_2\in Y$, where $y_1\ne y_2$. Since for any $a\in X\setminus \{x\}$ and $b\in Y$,
$$\rho((x,y_1),(a,b))= d_X(x,a)=\rho((x,y_2),(a,b)),$$
we can conclude that $W_x=W\cap Y_x$ is a metric generator of the gravitational metric space $M'_x$.
Hence, $|W_x|\ge \dim(M'_x)$ for every $x\in X$. Now, let $\dot{x}\in X_Y$ and $x_i,x_j\in \dot{x}$. If $|W_{x_i}|= \dim(M'_{x_i})$ and $|W_{x_j}|= \dim(M'_{x_j})$, then $W_{x_i}$ is a metric basis of $M'_{x_i}$ and $W_{x_j}$ is a metric basis of $M'_{x_j}$, which implies that there exist
$w_i\in  Y_{x_i}$ and $w_j\in  Y_{x_j}$ such that $d_{\eta(x_i)}(w_i,w)=\ell_{x_i}=d_X(x_i,x_j)=\ell_{x_j}=d_{\eta(x_j)}(w_j,w')$ for every $w \in W_{x_i}$ and $w' \in W_{x_j}$. Hence, $w_i$ and $w_j$ are not distinguished by the elements in $W$, which is a contradiction. Thus, $|W_{x_i}|\ge \dim(M'_{x_i})+1$ or $|W_{x_j}|\ge \dim(M'_{x_j})+1$.
Therefore,
$$\dim(M\circ M')=|W|=\sum_{x\in X}|W'_x|\ge \sum_{x\in X}\dim(M'_x)+\sum_{\dot{x}\in X_Y}(|\dot{x}|-1).$$
It remains to show that $\dim(M\circ M')\le \sum_{x\in X}\dim(M'_x)+\sum_{\dot{x}\in X_Y}(|\dot{x}|-1).$
To this end, for every $ \dot{x}\in X_Y$, and for all but one $x\in \dot{x}$, we fix a metric basis $S_x$ of $M'_x$ and define $S'_x=S_x\cup \{z\}$, where $z\in Y_x$ is the only point in $Y_x$ such that $d_{\eta(x)}(z,s)=\ell_x$ for every $s\in S_x$. Now, for the remaining $x\in X$ we take $S'_x$ as a metric basis of $M'_x$ such that if $x\in \dot{x}\not \in X_Y$, then for every $w\in Y_x$ there exist $s_i,s_j\in S'_{x}$ such that $d_{\eta(x)}(w,s_i)\ne \ell_{x}$ or $d_{\eta(x)}(w,s_j)\ne \ell_x$.  We claim that $S=\cup_{x\in X}S'_x$ is a metric generator of $M\circ M'$. To see this, we differentiate three cases for two different points $u=(u_1,u_2)$ and $v=(v_1,v_2)$  of $M\circ M'$ not belonging to $S$.
\\
\noindent Case 1. $u_1=v_1$. In this case,  there exists a point in $S'_{u_1}\subseteq S$ which distinguishes $u$ and $v$, as $S'_x$ is a metric generator of $M'_x$  for any $x\in X$.
\\
\noindent Case 2. $u_1\ne v_1$ and $v_1\in \dot{u_1}$. Since $u_1$ and $v_1$ are twins,  by the way in which we have chosen $S'_{u_1}$ and $S'_{v_1}$ we can claim that at least one of the following conditions holds:
\\
\noindent (a) There exist $s_i,s_j\in S'_{u_1}$ such that $d_{\eta(u_1)}(u,s_i)\ne \ell_{u_1} $ or $ d_{\eta(u_1)}(u,s_j)\ne \ell_{u_1}$.
\\
(b) There exist $s_i,s_j\in S'_{v_1}$ such that $d_{\eta(v_1)}(v,s_i)\ne \ell_{v_1} $ or $d_{\eta(v_1)}(v,s_j)\ne \ell_{v_1}$.
\\
From each one of these conditions we deduce that $\rho(s_i,u)\ne \rho(s_i,v)$ or $\rho(s_j,u)\ne \rho(s_j,v)$. For instance, suppose that (a) holds. In such a case, $\rho(s_i,u)=d_{\eta(u_1)}(u,s_i)$, $\rho(s_j,u)=d_{\eta(u_1)}(u,s_j)$ and $\rho(s_i,v)=\rho(s_j,v)=d_X(u_1,v_1)=\ell_{u_1}=\ell_{v_1}$. Thus, if $\rho(s_i,u)= \rho(s_i,v)$, then  $\rho(s_j,u)\ne \rho(s_j,v)$, and if $\rho(s_j,u)= \rho(s_j,v)$, then  $\rho(s_i,u)\ne \rho(s_i,v)$.
\\
\noindent Case 3.  $v_1\not\in \dot{u_1}$. Since $u_1$ and $v_1$ are not twins,  there exists $x\in X\setminus \{u_1,v_1\}$ such that $d_X(x,u_1)\ne  d_X(x,v_1)$. Hence, for every $s\in S'_x$ we have that $\rho(s,u)=d_X(x,u_1)\ne  d_X(x,v_1)=\rho(s,v).$

According to the three cases above, $S$ is a metric generator of $M\circ M'$ and so
$$\dim(M\circ M')\le |S|=\sum_{x\in X}|S'_x|=\sum_{x\in X}\dim(M'_x)+\sum_{\dot{x}\in X_Y}(|\dot{x}|-1),$$
as required.\end{proof}

The metric dimension of lexicographic product graphs was previously studied in \cite{JanOmo2012,Saputro2013}.
As we can expect, we can apply the result above to the particular case of graphs. When we discuss a connected graph $G=(V,E)$, we consider the metric space $(V,d)$, where $V$ is the vertex set of
$G$, and $d$ is the shortest path metric in which the distance between
two vertices is the smallest number of edges that connect them.
In fact,  as a particular case of Theorem \ref{MainTheoremMD-LexicMetricSpaces}, we obtain a formula for the metric dimension of any lexicographic product graph,  which is a way of expressing the three main results obtained in~\cite{JanOmo2012} in a unified manner.  Obviously,  we  can also apply the result above to study the metric dimension of
 weighted graphs.

The following result is a direct consequence of Theorem \ref{MainTheoremMD-LexicMetricSpaces}.

\begin{corollary}
Let  $M=(X,d_X)$ and $M'=(Y,d_Y)$ be two metric spaces such that $\eta(M)>0$. If $|X|=+\infty$ or, if there exists $x\in X$ such that $\dim(M'_x)=+\infty$, then
$\dim(M\circ M')=+\infty.$
\end{corollary}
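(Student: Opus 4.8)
The plan is to obtain the conclusion directly from the closed formula established in Theorem~\ref{MainTheoremMD-LexicMetricSpaces},
$$\dim(M\circ M')= \sum_{x\in X}\dim(M'_x)+\sum_{\dot{x}\in X_Y}(|\dot{x}|-1),$$
by observing that every summand on the right-hand side is non-negative, so that it suffices to force one of the two sums to be infinite under each of the two hypotheses.

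First I would record a uniform lower bound for the terms of the first sum. Each gravitational space $M'_x=(Y,d_{\eta(x)})$ has the same underlying point set $Y$ as $M'$, and for $y_i\ne y_j$ we have $d_{\eta(x)}(y_i,y_j)=\min\{2\eta(x),d_Y(y_i,y_j)\}>0$; thus $M'_x$ genuinely has at least two points, since by our standing assumption every space under consideration does (in particular $|Y|\ge 2$). Because the empty set cannot resolve a space with two or more points, I would conclude that $\dim(M'_x)\ge 1$ for every $x\in X$. I would also note that each $\dot{x}\in X_Y$ is a non-singleton twin class, so that $|\dot{x}|-1\ge 1$ and the second sum is non-negative as well.

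With these observations in place, the two hypotheses split into two easy cases. If there is some $x_0\in X$ with $\dim(M'_{x_0})=+\infty$, then the first sum already contains an infinite summand while all other terms of both sums are non-negative, so the right-hand side equals $+\infty$. If instead $|X|=+\infty$, then $\sum_{x\in X}\dim(M'_x)$ is a sum of infinitely many terms each at least $1$ and hence equals $+\infty$, and adding the non-negative second sum leaves the value unchanged. In either case the formula yields $\dim(M\circ M')=+\infty$.

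Since the statement is a corollary of an already proved theorem, I do not expect a genuine obstacle. The only point needing care is the uniform bound $\dim(M'_x)\ge 1$, which is exactly what makes the counting argument in the case $|X|=+\infty$ go through; this bound rests entirely on $M'$ having at least two points, and I would be sure to invoke that standing assumption explicitly rather than leave it implicit.
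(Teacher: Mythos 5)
Your proof is correct and takes essentially the same route as the paper, which states this corollary as a direct consequence of Theorem~\ref{MainTheoremMD-LexicMetricSpaces} without further detail; your write-up simply makes explicit the two points the paper leaves implicit, namely that $\dim(M'_x)\ge 1$ for every $x$ (via the standing assumption that $M'$ has at least two points) and that an infinite sum of terms each at least $1$, or a sum containing an infinite term, is $+\infty$.
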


From Theorems \ref{Dim^tUnboundedSpaces}  and \ref{MainTheoremMD-LexicMetricSpaces} we deduce the following result.

\begin{theorem}
Let $M$ be a metric spaces  such that $\eta(M)>0$. If $M'$ is an unbounded metric space, then
$\dim(M\circ M')=+\infty.$
\end{theorem}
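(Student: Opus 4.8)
The plan is to read off the conclusion directly from the explicit formula in Theorem~\ref{MainTheoremMD-LexicMetricSpaces} together with the ``unbounded implies infinite metric dimension'' phenomenon recorded in Theorem~\ref{Dim^tUnboundedSpaces}. The point to exploit is that each subspace $M'_x$ appearing in that formula is, by construction, the gravitational metric space $M'_{\eta(x)}=(Y,d_{\eta(x)})$ with gravitation constant $2\eta(x)$; hence Theorem~\ref{Dim^tUnboundedSpaces} applies to it, provided its gravitation constant is a genuine positive real number.

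First I would fix an arbitrary $x\in X$ and check that $\eta(x)$ is a positive real. Since we are assuming that every space under consideration has at least two points, the set $X\setminus\{x\}$ is nonempty, so $\eta(x)=\inf\{d_X(x,y):y\in X\setminus\{x\}\}$ is bounded above by the (finite) distance from $x$ to any other point of $X$ and is therefore a finite real number; moreover $\eta(x)\ge \eta(M)>0$ by hypothesis. Thus $t=\eta(x)\in(0,+\infty)$, and Theorem~\ref{Dim^tUnboundedSpaces}, applied to the unbounded space $M'$ with this value of $t$, yields $\dim(M'_x)=\dim(M'_{\eta(x)})=+\infty$.

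It then remains to feed this into the formula
\[
\dim(M\circ M')=\sum_{x\in X}\dim(M'_x)+\sum_{\dot{x}\in X_Y}(|\dot{x}|-1).
\]
Every summand in the second sum is nonnegative (each non-singleton twin class satisfies $|\dot{x}|\ge 2$, so $|\dot{x}|-1\ge 1$), while the first sum already contains the term $\dim(M'_x)=+\infty$ for the $x$ fixed above. Hence the right-hand side equals $+\infty$, giving $\dim(M\circ M')=+\infty$, as claimed.

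The only genuine subtlety, and the one step I would take care to spell out, is the verification that $\eta(x)$ is a \emph{positive real}, neither $0$ nor $+\infty$: the lower bound $\eta(x)\ge\eta(M)>0$ rules out the value $0$, and the standing assumption $|X|\ge 2$ rules out the degenerate case $X=\{x\}$ in which $\eta(x)$ would be the infimum over the empty set. Once that is in place the argument is immediate, since Theorem~\ref{MainTheoremMD-LexicMetricSpaces} already carries all the combinatorial weight and no new metric estimate is required.
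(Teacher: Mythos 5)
Your proof is correct and follows exactly the paper's route: the paper likewise deduces this theorem directly from Theorem~\ref{Dim^tUnboundedSpaces} applied to the gravitational spaces $M'_x$ together with the formula of Theorem~\ref{MainTheoremMD-LexicMetricSpaces}. Your careful verification that each $\eta(x)$ is a finite positive real is a detail the paper leaves implicit, and it is a worthwhile addition.
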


From the proof of Theorem \ref{MainTheoremMD-LexicMetricSpaces} we learned that the intersection of any metric basis of $M\circ M'$ with the set of points of any specific gravitational metric space $M'_x$ is a metric generator of $M'_x$. In this sense,  we can relate Theorems \ref {ThGravitationalCompact}, \ref{CompactLexicographicMetricSpaces} and \ref{151123d} as follows.

\begin{theorem}\label{CompactLexicographicMetricSpacesOfFiniteDim}
Let $M=(X,d_X)$  be a metric space  such that $\eta(M)>0$. If  $M'$ is a compact metric space and $\dim(M\circ M')=m<+\infty$, then the following assertions hold.
\begin{enumerate}[{\rm (i)}]
\item $M\circ M'$ is homeomorphic to a compact subset of the Euclidean space~$\mathbb{R}^m$.

\item $|X|<+\infty$ and for each $x_i\in X$ there exists a positive integer $m_i$ such that $\sum_{i=1}^{|X|}m_i\le m$ and the gravitational metric space $M'_{x_i}$ is homeomorphic to a compact subset of the Euclidean space
$\mathbb{R}^{m_i}$.
\end{enumerate}
\end{theorem}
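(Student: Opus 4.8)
The plan is to derive both assertions by assembling the structural results already established, using Theorem~\ref{151123d} as the common engine for producing Euclidean embeddings. The key preliminary observation is that the hypothesis $\dim(M\circ M')=m<+\infty$ forces strong finiteness, via the contrapositive of the corollary following Theorem~\ref{MainTheoremMD-LexicMetricSpaces}: since $\dim(M\circ M')<+\infty$, we must have $|X|<+\infty$ and $\dim(M'_x)<+\infty$ for every $x\in X$. This single deduction underpins everything that follows.

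For assertion (i), I would first combine $|X|<+\infty$ with the compactness of $M'$ and invoke Theorem~\ref{CompactLexicographicMetricSpaces} to conclude that $M\circ M'$ is compact. Since moreover $\dim(M\circ M')=m<+\infty$, Theorem~\ref{151123d} applies verbatim and yields that $M\circ M'$ is homeomorphic to a compact subset of $\mathbb{R}^m$.

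For assertion (ii), I already have $|X|<+\infty$; write $X=\{x_1,\dots,x_{|X|}\}$. Fix $x_i$ and set $m_i=\dim(M'_{x_i})$. I would argue that $m_i$ is a positive integer: it is finite by the finiteness deduction above, and it is at least $1$ because $M'_{x_i}$ shares the underlying set $Y$ of $M'$, the gravitational metric $d_{\eta(x_i)}(y,y')=\min\{2\eta(x_i),d_Y(y,y')\}$ keeps distinct points distinct (here $\eta(x_i)\ge\eta(M)>0$), so $M'_{x_i}$ has at least two points. Next, since $M'$ is compact, Theorem~\ref{ThGravitationalCompact} applied with $t=\eta(x_i)>0$ gives that $M'_{x_i}$ is compact. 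Having a compact space of finite metric dimension $m_i$, Theorem~\ref{151123d} makes $M'_{x_i}$ homeomorphic to a compact subset of $\mathbb{R}^{m_i}$.

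It remains to verify $\sum_{i=1}^{|X|}m_i\le m$, and this is the one step where the precise accounting matters. By the formula of Theorem~\ref{MainTheoremMD-LexicMetricSpaces},
$$m=\dim(M\circ M')=\sum_{x\in X}\dim(M'_x)+\sum_{\dot{x}\in X_Y}(|\dot{x}|-1).$$
Since $m_i=\dim(M'_{x_i})$, the first sum equals $\sum_{i=1}^{|X|}m_i$, while every term $|\dot{x}|-1$ in the second sum is nonnegative (indeed positive, as each $\dot{x}\in X_Y$ is a non-singleton twin class). Dropping the nonnegative twin-correction term yields the desired inequality. I expect this to be the main conceptual point rather than a genuine obstacle: the gap $m-\sum_i m_i=\sum_{\dot{x}\in X_Y}(|\dot{x}|-1)$ is exactly the contribution of twin equivalence classes, so the inequality is strict precisely when $M$ has twins detected by $X_Y$. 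All remaining steps are direct citations of earlier theorems, so no real difficulty arises beyond invoking the standing assumption that the spaces under consideration have at least two points, which is what guarantees $m_i\ge 1$.
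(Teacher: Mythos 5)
Your proof is correct and follows essentially the route the paper intends: the paper states this theorem without a detailed proof, presenting it as the combination of Theorems~\ref{ThGravitationalCompact}, \ref{CompactLexicographicMetricSpaces} and \ref{151123d} together with the dimension count from Theorem~\ref{MainTheoremMD-LexicMetricSpaces}, which is exactly the assembly you carry out. The only cosmetic difference is in the inequality $\sum_i m_i\le m$: the paper suggests obtaining it by partitioning a metric basis of $M\circ M'$ into the metric generators $W\cap Y_x$ of the spaces $M'_x$, whereas you drop the nonnegative twin-class term from the formula of Theorem~\ref{MainTheoremMD-LexicMetricSpaces} --- equivalent bookkeeping resting on the same result.
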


 If  every  twin equivalence class of $M$ is singleton, then we say that $M$ is a \emph{twin-free} metric space. From Theorem \ref{MainTheoremMD-LexicMetricSpaces} we deduce the following result.

\begin{corollary}\label{MD-LexicMetricSpaces-TwinsFree}
 If $M=(X,d_X)$ is a twin-free metric space with $\eta(M)>0$, them for any metric space $M'$,
$$\dim(M\circ M')= \sum_{x\in X}\dim(M'_x).$$
\end{corollary}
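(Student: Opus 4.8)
The plan is to derive the corollary immediately from Theorem~\ref{MainTheoremMD-LexicMetricSpaces}. That theorem, which applies since $\eta(M)>0$ is assumed, gives
$$\dim(M\circ M')= \sum_{x\in X}\dim(M'_x)+\sum_{\dot{x}\in X_Y}(|\dot{x}|-1),$$
so the statement differs from the general formula only by the correction term $\sum_{\dot{x}\in X_Y}(|\dot{x}|-1)$. The entire task is therefore to show that this correction term vanishes under the twins-free hypothesis.

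First I would unpack the definition of the index set $X_Y$. By construction, $X_Y$ is a collection of \emph{non-singleton} twin equivalence classes of $M$ (those additionally satisfying the attracting-point condition involving the $\ell_x$ and the bases $S_x$, but that extra condition plays no role here). Since $M$ is by assumption twins-free, every twin equivalence class of $M$ is a singleton; hence $M$ has no non-singleton twin equivalence classes whatsoever, and consequently $X_Y=\emptyset$.

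With $X_Y=\emptyset$, the sum $\sum_{\dot{x}\in X_Y}(|\dot{x}|-1)$ is an empty sum and equals $0$. Substituting this into the formula of Theorem~\ref{MainTheoremMD-LexicMetricSpaces} yields $\dim(M\circ M')=\sum_{x\in X}\dim(M'_x)$, as required. I expect no genuine obstacle in this argument: the only point to verify is the set-theoretic observation that the twins-free condition forces $X_Y$ to be empty, after which the conclusion is a single substitution into the already-established general formula.
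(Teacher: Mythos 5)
Your proof is correct and matches the paper's intended argument: the paper derives this corollary directly from Theorem~\ref{MainTheoremMD-LexicMetricSpaces}, with the (unstated) observation that twins-freeness makes $X_Y=\emptyset$ and hence the correction term an empty sum. Your write-up simply makes that observation explicit.
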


If $M=(X,d_X)$ and $M'=(Y,d_Y)$ are two metric spaces such that $D(M')< \eta(M)$, then $X_Y=\emptyset$. Therefore, from Theorem \ref{MainTheoremMD-LexicMetricSpaces} we deduce the following result.

\begin{corollary}\label{RelatDiameterNearness}
Let $M=(X,d_X)$ and $M'=(Y,d_Y)$ be two metric spaces. If $D(M')< \eta(M)$, then
$$\dim(M\circ M')=|X|\dim(M').$$
\end{corollary}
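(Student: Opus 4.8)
The plan is to obtain the corollary directly from Theorem~\ref{MainTheoremMD-LexicMetricSpaces}, whose master formula $\dim(M\circ M')=\sum_{x\in X}\dim(M'_x)+\sum_{\dot{x}\in X_Y}(|\dot{x}|-1)$ collapses to $|X|\dim(M')$ once one checks two facts under the hypothesis $D(M')<\eta(M)$: that $X_Y=\emptyset$, and that $\dim(M'_x)=\dim(M')$ for every $x\in X$. First I would note that Theorem~\ref{MainTheoremMD-LexicMetricSpaces} applies, since $\eta(M)>D(M')\ge 0$ forces $\eta(M)>0$, and I would record the elementary bound $\eta(x)\ge\eta(M)$ valid for all $x\in X$, which holds because $\eta(M)=\inf\{\eta(x):x\in X\}$.

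The first step is to show $X_Y=\emptyset$. I would take an arbitrary non-singleton twin equivalence class $\dot{x}$ of $M$, with associated constant $\ell_x=d_X(x_i,x_j)$ for $x_i,x_j\in\dot{x}$. Since $x_i\ne x_j$, the definition of nearness yields $\ell_x\ge\eta(x_i)\ge\eta(M)>D(M')$. On the other hand, for any $x\in\dot{x}$ and any two points $(x,y),(x,y')\in Y_x$ the gravitational distance obeys $d_{\eta(x)}((x,y),(x,y'))=\min\{2\eta(x),d_Y(y,y')\}\le d_Y(y,y')\le D(M')<\ell_x$. Hence no point of $Y_x$ is ever at gravitational distance exactly $\ell_x$ from another, so for any metric basis $S_x$ of $M'_x$ there is no $z\in Y_x$ with $d_{\eta(x)}(z,s)=\ell_x$ for all $s\in S_x$ (a single $s\in S_x$ already rules this out). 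Thus $\dot{x}\notin X_Y$, and since $\dot{x}$ was arbitrary, $X_Y=\emptyset$.

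The second step is to identify each gravitational space with $M'$. For every $x\in X$ and all $y,y'\in Y$ we have $2\eta(x)\ge\eta(x)\ge\eta(M)>D(M')\ge d_Y(y,y')$, so $d_{\eta(x)}((x,y),(x,y'))=\min\{2\eta(x),d_Y(y,y')\}=d_Y(y,y')$; that is, $M'_x$ carries exactly the metric of $M'$ and therefore $\dim(M'_x)=\dim(M')$. Substituting both facts into the master formula gives $\dim(M\circ M')=\sum_{x\in X}\dim(M'_x)=|X|\dim(M')$, as required.

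There is no serious obstacle here; the computation is short. The one point demanding care is the verification that $X_Y=\emptyset$, where the quantifier structure of the definition of $X_Y$ must be read correctly and one exploits that a single witness $s$ suffices to break the defining condition. A minor caveat is that this reading presumes the bases $S_x$ to be non-empty, i.e.\ that $M'$ has at least two points; under the standing convention that every space under consideration has at least two points, this holds and no degenerate case intervenes.
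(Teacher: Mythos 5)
Your proposal is correct and follows essentially the same route as the paper: the paper also deduces the corollary from Theorem~\ref{MainTheoremMD-LexicMetricSpaces} by observing that $D(M')<\eta(M)$ forces $X_Y=\emptyset$, with the identification $\dim(M'_x)=\dim(M')$ coming from the earlier remark that $D(M')\le 2\eta(x)$ makes $M'_x$ isometric to $M'$. Your write-up simply supplies the details (the bound $\ell_x\ge\eta(M)>D(M')$, the non-vacuousness of $S_x$) that the paper leaves implicit.
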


By Theorem~\ref{Dim^tUnboundedSpaces} we learned that  the study of the metric dimension of $M \circ M' $ should be restricted to cases where the gravitational metric spaces are bounded. The next result  shows how  to obtain a metric space $M''$ from $M'$, which satisfies the premises of  Corollary~\ref{RelatDiameterNearness} and $\dim(M'')=\dim(M').$

\begin{theorem}\label{CorollaryFromNon-bounded-to-bounded}
Let $M=(X,d_X)$ be a metric space  such that $\eta(M)>0$, and let $M'=(Y,d_Y)$ be a (non-necessarily bounded) metric space. If  $M''=(Y,d^*)$, where $d^*=\frac{\eta(M) d_Y}{\eta(M)+ d_Y}$, then
$$\dim(M\circ M'')=|X|\dim(M')=|X|\dim(M'').$$
\end{theorem}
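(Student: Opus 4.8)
The plan is to establish three things in turn: that $d^{*}$ is genuinely a metric on $Y$, that the order-preserving nature of the transform forces $\dim(M'')=\dim(M')$, and that the smallness of the $d^{*}$-distances places $M\circ M''$ in the regime of Theorem~\ref{MainTheoremMD-LexicMetricSpaces} with empty $X_Y$, yielding the product formula. Throughout I write $d^{*}=f\circ d_Y$ with $f(t)=\dfrac{\eta(M)\,t}{\eta(M)+t}$.

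First I would check that $d^{*}$ is a metric. The map $f:[0,\infty)\to[0,\infty)$ satisfies $f(0)=0$, is strictly increasing since $f'(t)=\eta(M)^{2}/(\eta(M)+t)^{2}>0$, and is concave since $f''(t)=-2\eta(M)^{2}/(\eta(M)+t)^{3}<0$. A concave function vanishing at the origin is subadditive, so $f(a+b)\le f(a)+f(b)$; combining subadditivity and monotonicity with the triangle inequality for $d_Y$ gives $d^{*}(y,z)\le f(d_Y(y,w)+d_Y(w,z))\le d^{*}(y,w)+d^{*}(w,z)$. Symmetry and the identity of indiscernibles are immediate from $f(t)=0\iff t=0$, so $M''=(Y,d^{*})$ is a metric space. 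The equality $\dim(M'')=\dim(M')$ then follows because $f$ is strictly increasing, hence injective: for any $A\subseteq Y$ and $y,y'\in Y$ one has $d^{*}(y,a)=d^{*}(y',a)\iff d_Y(y,a)=d_Y(y',a)$ for all $a\in A$, so $A$ resolves $M'$ exactly when it resolves $M''$. Thus $\mathcal{R}(M')=\mathcal{R}(M'')$ and the two dimensions agree, with the convention that both are $+\infty$ together.

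The last and most delicate step is to control the size of the distances in $M''$. Since $f$ is bounded above by its limit $\eta(M)$ and never attains it, every \emph{individual} distance satisfies $d^{*}(y,y')<\eta(M)\le\eta(x)<2\eta(x)$ for all $x\in X$. Hence the gravitational cap $2\eta(x)$ is never active, so each $M''_x$ is isometric to $M''$ and $\dim(M''_x)=\dim(M'')=\dim(M')$. The same strict bound kills $X_Y$: for any non-singleton twin class $\dot x$ of $M$, any $z=(x,y)\in Y_x$ and any $s=(x,y')$ in a metric basis $S_x$ of $M''_x$, one gets $d_{\eta(x)}(z,s)=\min\{2\eta(x),d^{*}(y,y')\}=d^{*}(y,y')<\eta(M)\le\ell_x$, so no point attaining $d_{\eta(x)}(z,s)=\ell_x$ can exist and therefore $X_Y=\emptyset$. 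Substituting $\dim(M''_x)=\dim(M')$ and $X_Y=\emptyset$ into Theorem~\ref{MainTheoremMD-LexicMetricSpaces} yields $\dim(M\circ M'')=\sum_{x\in X}\dim(M''_x)=|X|\dim(M')=|X|\dim(M'')$, as claimed.

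The main obstacle I anticipate is the interface with Corollary~\ref{RelatDiameterNearness}. When $M'$ is unbounded we have $D(M'')=\sup f=\eta(M)$ rather than the strict $D(M'')<\eta(M)$ that the corollary requires, so it cannot be invoked verbatim in that case. The resolution is to notice that the corollary's conclusion really depends only on every individual distance being strictly below $\eta(M)$ (which is what forces $X_Y=\emptyset$), a property that persists even though the supremum reaches $\eta(M)$; this is why I route the final step through the direct verification of $X_Y=\emptyset$ and the isometry $M''_x\cong M''$ instead of through a diameter inequality. When $M'$ happens to be bounded, $D(M'')<\eta(M)$ holds strictly and Corollary~\ref{RelatDiameterNearness} may be applied directly for a shorter argument.
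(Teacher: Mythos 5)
Your proof is correct, and it is in one respect more careful than the paper's own argument. The paper's proof is short: it observes that $d^*(y_1,y_2)<\eta(M)$ for every pair $y_1,y_2\in Y$, concludes from this that $D(M'')<\eta(M)$, and then invokes Corollary~\ref{RelatDiameterNearness}; the equality $\dim(M')=\dim(M'')$ is obtained exactly as you obtain it, from the strict monotonicity of $t\mapsto \eta(M)t/(\eta(M)+t)$. You correctly identified the weak point of that route: when $M'$ is unbounded (the case the theorem is explicitly designed to cover), the supremum $D(M'')$ equals $\eta(M)$ even though no individual distance attains it, so the strict inequality $D(M'')<\eta(M)$ asserted in the paper fails and Corollary~\ref{RelatDiameterNearness} cannot be applied verbatim. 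Your remedy---returning to Theorem~\ref{MainTheoremMD-LexicMetricSpaces} and checking directly that every individual distance being $<\eta(M)\le\eta(x)<2\eta(x)$ makes each gravitational space $M''_x$ isometric to $M''$, and that $\ell_x\ge \eta(M)$ forces $X_Y=\emptyset$---is exactly what is needed; in effect you have shown that Corollary~\ref{RelatDiameterNearness} remains valid under the weaker hypothesis that all individual distances of $M'$ lie strictly below $\eta(M)$, which repairs the paper's argument. Your verification that $d^*$ is in fact a metric (concavity and subadditivity of $f$) is a further worthwhile addition that the paper leaves implicit. The only point stated without justification is the inequality $\eta(M)\le\ell_x$, but this is immediate: $\ell_x$ is the distance between two distinct points of $X$, hence at least $\eta(M)$ by the definition of nearness.
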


\begin{proof}
For any $y_1,y_2\in Y$ we have that $$d^*(y_1,y_2)=\frac{\eta(M) d_Y(y_1,y_2)}{\eta(M)+ d_Y(y_1,y_2)}<\eta(M).$$
Hence, $D(M'')<\eta(M)$ and from Corollary \ref{RelatDiameterNearness} we deduce that $\dim(M\circ M'')=|X|\dim(M'').$ Now, since $d^*(y,y_1)<d^*(y,y_2)$ if and only if $d_Y(y,y_1)<d_Y(y,y_2)$, we can conclude that $\dim(M')=\dim(M'')$. Therefore, the result follows.
\end{proof}

We can apply the result above in the field of graph theory as follows. Let $G=(V,E)$ be a simple and  connected graph, let $M=(V,d)$ be the metric space associated to $G$,  and $M'=(Y,d_Y)$  a (non-necessarily bounded) metric space.
We construct a bounded metric space $M''=(Y,d^*)$ equipped with the metric $d^*=\frac{d_Y}{1+d_Y}$.
Notice that in this case we have $D(M'')<1= \eta(M)$. By  Theorem~\ref{CorollaryFromNon-bounded-to-bounded}, we have that $\dim(M\circ M'')=|V|\dim(M'')=|V|\dim(M').$ Now, there exists a complete weighted graph $G^*$ whose vertex set is $Y$ and the weight of any edge $\{u,v\}$ equals $d^*(u,v)$.  Therefore, the metric dimension of the lexicographic product graph $G\circ G^*$ is $\dim(G\circ G^*)=|V|\dim(G^*)$.
For instance, if $M'$ is the $n$-dimensional Euclidean space, then we have that $\dim(G\circ G^*)=|V|(n+1)$, as $\dim(\mathbb{R}^n)=n+1$.


\section*{Acknowledgements}
The author would like to thank his colleagues Douglas J. Klein, Cong X. Kang and Eunjeong Yi, who invited him to do a research stay at Texas A$\&$M University in 2018. The results included in this paper were crafted there.  He also would thank Charles Hudson and Alejandro Estrada-Moreno for the review of the manuscript.
This research was supported in part by  the Spanish  government  under  the grants MTM2016-78227-C2-1-P and PRX17/00102.


\bibliographystyle{elsart-num-sort}



\end{document}